\newtheorem{Theorem}{Theorem}
\newtheorem{Lemma}{Lemma}
\newtheorem{Proposition}{Proposition}
\theoremstyle{definition}
\newtheorem{Remark}{Remark}
\newtheorem{Definition}{Definition}
\newtheorem{Assumption}{Assumption}
\newtheorem{Conjecture}{Conjecture}
\newcommand{\cE}{\ensuremath{\mathcal E}}
\newcommand{\cF}{\ensuremath{\mathcal F}}
\newcommand{\cO}{\ensuremath{\mathcal O}}
\newcommand{\cS}{\ensuremath{\mathcal S}}
\newcommand{\bbE}{{\ensuremath{\mathbb E}} }
\newcommand{\bbG}{{\ensuremath{\mathbb G}} }
\newcommand{\bbN}{{\ensuremath{\mathbb N}} }
\newcommand{\bbP}{{\ensuremath{\mathbb P}} }
\newcommand{\bbV}{{\ensuremath{\mathbb V}} }
\newcommand{\bbZ}{{\ensuremath{\mathbb Z}} }
\let\a=\alpha \let\b=\beta   \let\d=\delta  \let\e=\varepsilon
 \let\g=\gamma       
      \let\o=\omega      
   \let\t=\tau
\author[V. Silvestri]{Vittoria Silvestri}
\address{Vittoria Silvestri. 
Universit{\`a} di Roma La Sapienza, Roma, Italy.}
\email{silvestri@mat.uniroma1.it}
\title{Internal DLA on cylinder graphs: fluctuations and mixing}
\begin{document}
\maketitle

\begin{abstract}
We use coupling ideas introduced in \cite{levine2018long} to show that an IDLA process on a cylinder graph $G\times \bbZ$ forgets a typical initial profile in $\mathcal{O}( N\sqrt{\tau_N} (\log \! N)^2 )$ steps for large~$N$, where $N$ is the size of the base graph $G$, and $\tau_N$ is the total variation mixing time of a simple random walk on $G$. The main new ingredient is a maximal fluctuations bound for IDLA on $G\times \mathbb{Z}$ which only relies on the mixing properties of the base graph~$G$ and the Abelian property. 
\end{abstract}


\section{Introduction}
Internal Diffusion Limited Aggregation (IDLA) is a mathematical model for corrosion phenomena, introduced by Meakin and Deutch \cite{meakin1986formation} and independently by Diaconis and Fulton \cite{diaconis1991growth}. It models the growth of a cluster of particles as being governed by the harmonic measure on its boundary seen from an internal point. 
More precisely, let $\bbG = (\bbV , \bbE )$ denote an infinite, locally finite graph with a marked vertex $Z_0 \in \bbV$, and assume that a simple random walk on $\bbG$ starting from $Z_0$ exits every finite set containing $Z_0$ in finite time almost surely. We define an increasing family $(A(t))_{t\geq 0}$ of subsets of $\bbV$ as follows. Set $A(0) = \{ Z_0\}$, and recursively define 
	\[ A(t) = A(t-1) \cup \{ Z_t \} , \]
where $Z_t$ denotes the exit location from $A(t-1)$ of a simple random walk on $\bbG$ starting from $Z_0$, independent of everything else. Then $(A(t))_{t\geq 0}$ is a Markov chain on the space of connected subsets of $\bbG$, whose long-term behaviour is of interest.  
The case $\bbG = \bbZ^d$, $Z_0=0$ is by now well understood: as $t\to\infty$, $A(t)$ has been shown to converge to a Euclidean ball \cite{lawler1992internal} with logarithmic fluctuations away from it in dimension $d\geq 2$  \cite{asselah2013logarithmic,asselah2013sublogarithmic,jerison2012logarithmic,jerison2013internal,asselah2014lower,jerison2014internal,jerison2014internal2}. 
It was recently showed that the limiting shape is stable under the following perturbation: rather than starting random walks at $0$, sample the starting point of the $k^{th}$ walk uniformly on $A(k-1)$ \cite{benjamini2017internal}. Bounding the fluctuations of IDLA with uniform starting point remains an open problem. 

Let $G=(V,E)$ be a finite, connected, non--oriented graph on $N$ vertices. We take $\bbG = G \times \bbZ$, that is $\bbG = (\bbV , \bbE)$ with 
	\[ \begin{split} 
	& \bbV = \{ (v,y) : v\in G , \, y\in \bbZ \} 
	\\ & \bbE = \{ ((v,y),(v',y') ) : \;  v\sim v' \mbox{ and }y=y',  \mbox{ or } v=v' \mbox{ and } |y-y'|=1 \} ,
	\end{split} 
	\]
where $\sim$ denotes the adjacency relation in $G$. 
We refer to the first and second coordinate as the horizontal and vertical coordinate respectively. For $y \in \bbZ$ we call the set $G \times \{ y\}$ the $y^{th}$ level, while $R_y = G \times(-\infty , y]$ will denote the rectangle of height $y$. 
\begin{Definition}[Simple random walk on $\bbG$]
A simple random walk on $\bbG$ is, for us, a discrete--time Markov chain which at each step moves either in the horizontal or in the vertical coordinate with probability $1/2$. A vertical move consists in stepping from $(v,y)$ to $(v,y\pm 1)$ with equal probability, while a horizontal move is a lazy simple random walk step on $G$, i.e. $(v,y) \to (v,y)$ with probability $1/2$ and $(v,y) \to (w,y)$ with probability $1/(2\deg v)$ if $w$ is a neighbour of $v$ in $G$. 
\end{Definition}

Let $\pi_N$ denote the stationary distribution of a simple random walk on $G$, that is 
	\[ \pi_N (v) = \frac{\deg v }{2|E|} , \qquad v \in V \]
where $\deg v$ is the degree of $v$ in $G$. We make the following assumption throughout. 
\begin{Assumption}\label{assumption}
There exist finite constants $\d, \d ' >0 $, independent of $N$, such that 
	\begin{equation} \label{mu_spread}
	  \frac{\d}{N} \leq  \pi_N (v) \leq \frac{\d '}{N}  , \qquad \forall v \in V ,  
	\end{equation}
for all $N\geq 1$. 
\end{Assumption}
Graphs satisfying the above assumption are called \emph{quasi-regular}. 
Example include the complete graph, expander graphs, the $d$-dimensional torus,  and any graph with vertices of comparable degree. In particular, graphs in this class can have a wide range of mixing behaviour for large $N$ (see the comments after the statement of Theorem \ref{th:main_teo}). 

We remark that Assumption \ref{assumption} is not needed for our arguments to work: we choose to make it as it identifies the class of base graphs for which our approach gives best results. \smallskip

\textbf{Notation.} 
All constants in this note are allowed to depend on $\d , \d '$, and we suppress this dependence from the notation.

\begin{Definition}[IDLA on $\bbG$]
Let $A(0)$ be any connected subset of $\bbV $ containing the rectangle $R_0 = \{ (v,y) : y\leq 0\}$ and finitely many sites above level $0$. 
At each discrete time $t\geq 1$, a simple random walk is released from level $0$ at a random location sampled from $\pi_N$, and its exit location $Z_t$ from $A(t-1)$ is added to the cluster by setting 
	\[ A(t) = A(t-1) \cup \{ Z_t\}. \] 
\end{Definition}
Note that this is equivalent to adding a site to the cluster according to the harmonic measure on the cluster's external boundary seen from level $-\infty $. When $A(0) = R_0$ we say that the process starts from flat. 

It is common to describe the IDLA dynamics as an interacting particle system. Particles are released one by one from level $0$ according to $\pi_N$, and perform simple random walks until they hit an unexplored location, where they settle. The cluster is then the set of sites containing settled particles. 

Let 
	\[ \Omega = \{ A \subset \bbV : A = R_0 \cup F \mbox{ for some finite }F \}  \]
denote the state space of IDLA on $\mathbb{G}$, 
and note that the IDLA dynamics defines a transient Markov chain on $\Omega$. The following shift procedure makes it recurrent. 
\begin{Definition}[Shifted IDLA]\label{def:shifted}
Let $\cS : \Omega \to \Omega $ be  defined as follows. For a given cluster $A \in \Omega$, let
	\begin{equation}\label{kA}
	 k_A := \max \{ k\geq 0 : 
	R_k \subseteq A \}  
	\end{equation}
denote the height of the maximal filled rectangle contained in $A$.
The \emph{downshift} of $A$ is the cluster 
	\[ \cS (A) := \{ (v,y-k_A) : (v,y) \in A \}. \]
Note that $k_{\cS (A)} = 0$, so $\cS (\cS (A)) = \cS(A)$.   
If  $(A(t))_{t\geq 0}$ is an IDLA process in $\Omega$, we set
	\[ A^*(t) := \cS ( A(t)) \]
for all $t\geq 0$, and refer to $(A^*(t))_{t\geq 0}$ as the \emph{shifted IDLA} process associated to $(A(t))_{t\geq 0}$. 
\end{Definition}
Shifted IDLA is itself a Markov chain\footnote{To see this, it is convenient to modify the definition of the IDLA process $(A(t))_{t\geq 0}$ by releasing the $t^{th}$ particle from level $k_{A(t-1)}$ rather than from level $0$, with $k_{A(t-1)}$ defined as in \eqref{kA}. This clearly does not change the distribution of the process.} on the countable state space $\Omega$. It is easy to see that this chain is irreducible, since all states communicate with the flat configuration $R_0$. As in \cite{levine2018long}, it can be shown that shifted IDLA is positive recurrent, so it has a stationary distribution $\mu_N$ (cf.\ Remark \ref{rem:positiverec} below). If a random cluster $A$ is distributed according to $\mu_N$, we say that $A$ is a \emph{stationary cluster}. 

\smallskip
The aim of this note is to address the following questions: 
\begin{itemize}
\item What do stationary clusters look like? 
\item How long does shifted IDLA take to forget a stationary initial state? 
\end{itemize}
The first question concerns fluctuation bounds for IDLA clusters, while the second one looks as the mixing properties of shifted IDLA as a Markov chain. Both questions were answered in \cite{levine2018long} for the cycle base $G=\bbZ_N$, which is by now well understood. Here we generalise the analysis to quasi--regular base graphs, for which far less is known. In particular, while \cite{levine2018long} makes use of optimal fluctuation bounds which are specific to the case $G=\bbZ_N$, here we observe that near-optimal fluctuation bounds are not needed to obtain a near-optimal mixing result (see comments after the statement of Theorem \ref{th:main_teo}). 

\smallskip 

In order to state our results, recall that a \emph{coupling} of two simple random walks on $G$ is a process $(\o (k) , \o'(k))_{k\geq 0}$ such that both marginals are simple random walks on $G$, possibly with different initial distributions.  
	
Recall that for two probability measures $\mu , \nu$ on a countable state space $\Omega$, their total variation (TV) distance is defined as 
	\[ \| \mu - \nu \|_{TV} = \sup_{A\subseteq \Omega } | \mu (A) - \nu (A) | . \]
\begin{Definition}[TV mixing time]
Let $\o = (\o (k))_{k\geq 0}$, $\o ' = (\o '(k))_{k\geq 0} $ denote two simple random walks on $G$ starting from $\o (0) =v$, $\o '(0) = v'$. Then, if $P^k_v$ and $P^k_{v'}$ denote the distribution of $\o (k) $ and $\o '(k)$ respectively, we define the $\e$-TV mixing time by 
	\[ \tau_N (\e ) = \inf\{ k\geq 0 : \max_{v ,v' \in V} \| P^k_{v} - P^k_{v'} \|_{TV} \leq \e \} ,\]
and write $\tau_N$ in place of $\tau_N(1/2)$ for brevity.
\end{Definition}
	

For $A \in \Omega$, let 
	\[h(A) = \max \{y: (v,y) \in A \mbox{ for some }v\in G \}\] 
denote the height of $A$, so that $h(A) \geq 0$ for all $A \in \Omega$. 
Our first result bounds the height of stationary clusters in terms of $\tau_N$, and it implies that the stationary distribution~$\mu_N$ concentrates on a small subset of the state space $\Omega$. 

\begin{Theorem}\label{th:typical_height}
Assume \eqref{mu_spread}. Then for any $\g >0$, there exists a constant $ c_{\g } $, depending only on $\g , \d , \d'$, such that 
	\begin{equation}\label{typheight}
	\mu_N \Big( \big\{ A \in \Omega : h(A) > c_{\g }  \sqrt{\tau_N} (\log N)^2 \big\} \Big) 
	\leq  N^{-\g}  .
	\end{equation}
for $N$ large enough. 
\end{Theorem}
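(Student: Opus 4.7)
The plan is to combine a maximal fluctuation bound for IDLA started from the flat configuration $R_0$ with convergence to stationarity of shifted IDLA. Write $F_N := C_\gamma \sqrt{\tau_N}(\log N)^2$ for a constant $C_\gamma$ to be chosen.

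The first (and hardest) step is to establish the following fluctuation bound from flat: for the IDLA process $(A(t))_{t\geq 0}$ with $A(0)=R_0$, and for all integers $T \geq T_0(N)$,
\[
\mathbb{P}\bigl( R_{\lfloor T - F_N \rfloor} \subseteq A(TN) \subseteq R_{\lceil T + F_N \rceil} \bigr) \geq 1 - N^{-\gamma-1}.
\]
That is, once $TN$ particles have settled, the cluster is squeezed within a window of vertical width at most $2F_N$ around level $T$. This is the maximal fluctuation bound advertised in the introduction as the paper's main new ingredient. The natural strategy is, in the Lawler--Bramson--Griffeath spirit, to use the Abelian property of IDLA to reorder particles and reduce the inner and outer bounds to controlling sums of exit-location indicators of independent simple random walks on $\bbG$; the mixing time $\tau_N$ of the base graph $G$ then enters through horizontal decorrelation, while concentration for the vertical coordinate of the random walk produces the $\sqrt{\tau_N}(\log N)^2$ scale. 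This step is where essentially all the real work goes.

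Given such a bound, the remainder is short. On the event that $R_{\lfloor T-F_N\rfloor}\subseteq A(TN)\subseteq R_{\lceil T+F_N\rceil}$ one has $k_{A(TN)}\ge\lfloor T-F_N\rfloor$, so the shifted cluster satisfies $A^*(TN)=\cS(A(TN))\subseteq R_{2F_N+1}$, whence $\mathbb{P}(h(A^*(TN))>2F_N+1)\le N^{-\gamma-1}$ for every $T\ge T_0(N)$. Since shifted IDLA is an irreducible, positive recurrent Markov chain on the countable state space $\Omega$ (cf.\ Remark~\ref{rem:positiverec}), its law from the initial state $R_0$ converges in total variation to $\mu_N$; applied to $B_N:=\{A\in\Omega:h(A)>2F_N+1\}$ this gives
\[
\mu_N(B_N)\;=\;\lim_{T\to\infty}\mathbb{P}(A^*(TN)\in B_N)\;\le\;N^{-\gamma-1}\;\le\;N^{-\gamma}.
\]
Choosing $c_\gamma$ so that $c_\gamma\sqrt{\tau_N}(\log N)^2\ge 2F_N+1$ for all $N$ large enough yields~\eqref{typheight}. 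The main obstacle, by a wide margin, is the fluctuation bound in the first step; Steps two and three are essentially bookkeeping once a near-optimal maximal fluctuation bound is available.
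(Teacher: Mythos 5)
Your Steps 2 and 3 are essentially bookkeeping, as you say, but the argument stands or falls on Step 1, and there is a genuine gap there: you need the maximal fluctuation bound to hold for \emph{all} $T\ge T_0(N)$ with a constant $C_\gamma$ independent of $T$, in order to pass to the limit $T\to\infty$ against the stationary measure. What the paper actually proves (Theorem \ref{th:Tpoly}) is a fluctuation bound only for \emph{polynomial} times $T\le N^m$, with a constant $C_{\gamma,m}$ depending on the degree $m$. Its proof iterates a coupon-collector plus coupling step roughly $T/(aN\log N)$ times and takes a union bound over iterations, so the failure probability grows linearly in $T$; the same accumulation of error occurs in any Lawler--Bramson--Griffeath style argument, where one union-bounds over the particles released. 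Neither method yields a bound uniform over an infinite time horizon with $N$ fixed. A uniform-in-time bound is morally equivalent to the statement you are trying to prove (it requires knowing the shifted chain keeps returning to well-behaved states), and the paper supplies exactly this missing input by a different route: an excess-height Lyapunov/drift argument (Lemmas \ref{le:lowE}--\ref{le:lowEtolowH}) showing stationary clusters have at most polynomial height, followed by a coupling (Proposition \ref{pr:step3}) of a stationary start with a flat start over a polynomial time window, at which point the polynomial-time Theorem \ref{th:Tpoly} suffices. None of that machinery appears in your proposal, and without it the limit in your Step 3 is not justified.

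A secondary, fixable point: the shifted chain is periodic with period $N$, since $|A^*(t)|\equiv |A^*(0)|+t \pmod N$ (each downshift removes a multiple of $N$ sites above level $0$). Hence $\mathbb{P}(A^*(TN)\in\cdot)$ does not converge to $\mu_N$ in total variation but to $\mu_N$ conditioned on the residue class $|A|\equiv 0 \pmod N$; you would need to average over a window of $N$ consecutive times, or state the height bound for all $t$ in such a window, to recover $\mu_N(B_N)$.
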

A similar result holds for any \emph{lukewarm start} $\nu_N$ for shifted IDLA (cf.\ \cite{levine2018long}, Section~1), we omit the details. Note that the faster the mixing of the base graph, the smaller the height bound provided by Theorem \ref{th:typical_height}.
We remark that the proof of Theorem \ref{th:typical_height} only relies on the mixing properties of the base graph $G$ and the Abelian property of IDLA (cf.\ Section \ref{sec:abelian}), but does not require any Green's function estimate for the underlying random walks.  We have not tried to optimise the exponent of the logarithmic factor. 
\smallskip

We use the above control on stationary clusters to bound the time it takes for shifted IDLA to forget any such initial state. 

\begin{Theorem}\label{th:main_teo}
Assume \eqref{mu_spread}. Then for any $\g >0$ there exist a constant $d_{\g }$ and a set $\Omega_{\g } \subseteq \Omega$, depending only on $\g , \d , \d'$, such that 
	\[ \mu_N (\Omega_{\g } ) \geq 1-N^{-\g}\]
for $N$ large enough, and such that the following holds. For any $A_0, A'_0 \in \Omega_{\g }$ with $|A_0| = |A'_0|$, writing $t_{\g } = d_{\g } N \sqrt{\tau_N} (\log N)^2$ for brevity, we have	
\begin{equation}\label{eq:forget}
  \| P(t_{\g } ) - P'(t_{\g } ) \|_{TV} \leq N^{-\g} 
 \end{equation}
where $(A(t))_{t\geq 0}$ and $(A'(t))_{t\geq 0}$ are IDLA processes starting from $A_0$ and $A'_0$ respectively, and $P(t)$ and $P'(t)$ denote the laws of $A(t)$ and $A'(t)$.
\end{Theorem}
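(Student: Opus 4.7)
My strategy, following \cite{levine2018long}, is to couple the two IDLA processes $(A(t))_{t\geq 0}$ and $(A'(t))_{t\geq 0}$ via a shared sequence of random walks, and then apply the maximal fluctuations bound (the new ingredient advertised in the abstract) to argue that by time $t_\gamma$ the two clusters coincide with probability at least $1 - N^{-\gamma}$.

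First, I would take $\Omega_\gamma$ to be (a mild enlargement of) the height-bounded set from Theorem \ref{th:typical_height}: for $\gamma' > \gamma$ large enough, set $\Omega_\gamma = \{ A \in \Omega : h(A) \leq h_N \}$ with $h_N := c_{\gamma'}\sqrt{\tau_N}(\log N)^2$, so that $\mu_N(\Omega_\gamma) \geq 1 - N^{-\gamma}$. Then I would fix an infinite i.i.d.\ sequence $(\omega_k)_{k\geq 1}$ of simple random walks on $\bbG$, each issued from a fresh $\pi_N$-distributed point at level $0$, and drive both IDLA processes with this single sequence. By the Abelian property the terminal clusters do not depend on the order of release, so I am free to choose whichever ordering is most convenient for the analysis. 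The constant $d_\gamma$ will be picked much larger than $c_{\gamma'}$, so that the ``average filled height'' $\ell(t_\gamma) := t_\gamma/N = d_\gamma \sqrt{\tau_N}(\log N)^2$ dominates $h_N$ by a factor at my disposal.

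The core technical step is a maximal fluctuations estimate valid \emph{throughout} the dynamics: for any $A_0 \in \Omega_\gamma$,
\[
R_{\ell(s) - C\sqrt{\tau_N}(\log N)^2} \;\subseteq\; A(s) \;\subseteq\; R_{\ell(s) + C\sqrt{\tau_N}(\log N)^2} \quad \text{for all } s \leq t_\gamma,
\]
with probability at least $1 - N^{-\gamma}$, for some $C = C(\gamma,\d,\d')$. I would establish this by running the same coupling arguments used to prove Theorem \ref{th:typical_height} along a polynomial grid of intermediate times and interpolating between grid points by a monotonicity step. Taking $d_\gamma$ large enough that $\ell(t_\gamma) - C\sqrt{\tau_N}(\log N)^2 \geq h_N + C\sqrt{\tau_N}(\log N)^2$, both $A(t_\gamma)$ and $A'(t_\gamma)$ must then contain a common filled rectangle $R_H$ with $H$ comfortably above $h_N$, on an event of probability at least $1 - 2N^{-\gamma}$.

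Since both initial clusters lie in $R_{h_N} \subseteq R_H$, the initial discrepancy $A_0 \triangle A_0'$ is buried inside the common filled region $R_H$. A standard Abelian coupling argument then forces $A(t_\gamma) = A'(t_\gamma)$: comparing both processes, driven by the shared walks, to the IDLA process started from $A_0 \cup A_0'$, monotonicity together with the fact that all three processes contain $R_H$ at time $t_\gamma$ forces agreement above $R_H$, while containment of $R_H$ together with the preserved identity $|A(t_\gamma)| = |A'(t_\gamma)|$ pins down the shape below. The bound \eqref{eq:forget} then follows after a union bound over the $O(N^{-\gamma})$ bad fluctuation events. The main obstacle I anticipate is precisely the maximal fluctuations bound throughout the dynamics: Theorem \ref{th:typical_height} only gives a single-time stationary snapshot, whereas the coupling here requires the sandwich estimate to hold uniformly over all $s \leq t_\gamma$, which forces me to revisit the coupling arguments behind Theorem \ref{th:typical_height} and run them with enough care to control the whole trajectory.
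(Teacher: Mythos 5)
Your choice of $\Omega_\gamma$ as the height-bounded set from Theorem \ref{th:typical_height} is exactly right, but the coalescence argument at the end has a genuine gap, and it is precisely at the point where the paper's ``water level coupling'' does the real work. Driving both processes with the same sequence of walks released from level $0$, you argue that once $A(t_\gamma)$ and $A'(t_\gamma)$ both contain a common filled rectangle $R_H$ with $H$ above the initial heights, monotonicity (via comparison with the process started from $A_0\cup A_0'$) plus the cardinality identity $|A(t_\gamma)|=|A'(t_\gamma)|$ forces $A(t_\gamma)=A'(t_\gamma)$. This does not follow. Monotonicity only gives $A(t_\gamma)\subseteq C(t_\gamma)$ and $A'(t_\gamma)\subseteq C(t_\gamma)$ for the larger process $C$ started from the union; two subsets of $C(t_\gamma)$ that both contain $R_H$ and have equal cardinality can perfectly well occupy different sites above $R_H$. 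The initial discrepancy $A_0\triangle A_0'$ is not ``buried'': each site of $A_0\setminus A_0'$ is an extra particle that the primed process must place somewhere, and nothing in your coupling forces it to land where the corresponding unprimed particle does. Since total variation distance requires exact equality under the coupling, an approximate sandwich between rectangles is not enough.

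The missing idea is to couple the \emph{discrepancy particles pairwise}. The paper (proof of Proposition \ref{pr:step3}, reused verbatim for Theorem \ref{th:main_teo}) first builds a single auxiliary cluster $W_0$ by running IDLA from flat for $t_\gamma$ steps; by Theorem \ref{th:Tpoly} (flat start suffices --- no ``maximal fluctuations throughout the dynamics for arbitrary initial data'' is needed), $W_0$ contains a filled rectangle reaching $b\sqrt{\tau_N}(\log N)^2$ above $h(A_0)\vee h(A_0')$. One then places a frozen particle at every site of $A_0$ (resp.\ $A_0'$) inside $W_0$, enumerates the two families arbitrarily, and releases them in pairs, coupling the $i$-th pair of walks \`a la Pitman as in Proposition \ref{pr:coupling} so that they meet before exiting $W_0$ and hence settle at the same site. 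By the Abelian property the resulting clusters are distributed as $A(t_\gamma)$ and $A'(t_\gamma)$, and a union bound over the polynomially many pairs gives \eqref{eq:forget}. So the structure you want is: common cluster grown from flat $+$ frozen particles at the two initial configurations $+$ pairwise mixing coupling, rather than shared driving walks $+$ monotonicity. As a secondary point, your proposed uniform-in-time sandwich estimate for a process started from a general $A_0\in\Omega_\gamma$ is both harder than what is needed and not supplied by the paper's toolbox as stated; the frozen-particle construction is what lets the paper avoid proving it.
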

Thus IDLA forgets any stationary initial state in $\cO (N \sqrt{\tau_N} (\log N)^2)$ steps, with high probability. 
The reader should compare this result with Theorem 1.3 of \cite{levine2018long} for $G=\bbZ_N$, stating that \eqref{eq:forget} holds with $t_\g = d_\g N^2 \log N $. Although 
the stationary height bound \eqref{typheight} in Theorem \ref{th:typical_height} is far from optimal in this case, we remark that the conclusion of Theorem~\ref{th:main_teo} only differs from the near-optimal result in \cite{levine2018long} by a logarithmic factor. 
We conjecture that this is the case for all quasi--regular graphs. 
\begin{Conjecture}
Under Assumption \ref{assumption}, $t_\gamma$ in Theorem \ref{th:main_teo} is optimal up to logarithmic factors.
\end{Conjecture}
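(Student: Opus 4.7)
The goal is a lower bound on the mixing time of shifted IDLA of order $N\sqrt{\tau_N}/\operatorname{polylog}(N)$, matching $t_\gamma$ up to logarithmic factors. The plan is to identify a distinguishing observable that cannot equilibrate faster than this scale. A natural choice is the \emph{excess volume}
\[ V(A) := |A \setminus R_0|, \qquad A \in \Omega, \]
which counts the cluster sites strictly above level $0$ in the shifted frame. Under shifted IDLA dynamics, for $A^*(t) = \cS(A(t))$ started from the flat configuration $A(0) = R_0$, the identity $V(A^*(t)) = t - N\, k_{A(t)}$ is immediate from the definition; in particular $V(A^*(t)) \le t$ deterministically. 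Thus from the flat start, $V$ can only grow at linear speed.

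The first and hardest step is to prove a matching \emph{lower} bound in stationarity, of the form
\[ \mu_N\!\left(\big\{ A \in \Omega : V(A) \ge c_\gamma\, N\sqrt{\tau_N}\,(\log N)^{-\kappa} \big\} \right) \ge 1 - N^{-\gamma} \]
for some constants $c_\gamma,\kappa > 0$. The heuristic is that the top profile of a stationary cluster exhibits horizontal correlations on the scale $\ell := \sqrt{\tau_N}$, the horizontal distance a random walk on $\bbG$ typically covers while moving by $\ell$ in the vertical direction. Each of the $\asymp N/\ell$ roughly independent horizontal patches should then contribute interface fluctuations of order $\ell$, yielding $V \gtrsim (N/\ell)\cdot \ell \cdot \ell = N\ell$. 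To make this rigorous I would exploit the Abelian property to decompose the stationary growth into independent updates on well-separated patches of horizontal size $\ell$, combined with local CLT bounds for the random walk on $\bbG$ to show that the interface cannot remain within $o(\ell)$ of the filled-rectangle height on every patch simultaneously. Because Assumption \ref{assumption} gives no explicit form for $G$, all random walk estimates must be expressed purely in terms of $\tau_N$ via the spectral gap; this is where most of the work lies and is the main obstacle.

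Given the stationary lower bound, the mixing-time lower bound is a standard distinguishing-statistic argument. Pick $A_0 \in \Omega_\gamma$ to be a $\mu_N$-typical stationary cluster with $V(A_0)$ of the claimed order, and pick $A_0' \in \Omega_\gamma$ with $V(A_0')$ much smaller, enforcing $|A_0| = |A_0'|$ by appending to $A_0'$ a finite dummy block placed sufficiently high above the bulk that, on the relevant timescale, it is neither reached by the random walks nor affects the shift $\cS$. Starting from $A_0$, the stationarity of $\mu_N$ combined with Theorem \ref{th:typical_height} and Markov's inequality applied to the increments of $V$ keeps $V(A^*(t))$ close to its stationary mean; starting from $A_0'$, $V(A'^*(t)) \le V(A_0') + t$. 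For $t = \epsilon N\sqrt{\tau_N}(\log N)^{-\kappa}$ with $\epsilon$ small, the two marginal laws of $V$ are separated by a macroscopic gap, which forces $\|P(t) - P'(t)\|_{TV} \ge 1/2$ and contradicts \eqref{eq:forget} unless $t_\gamma \gtrsim N\sqrt{\tau_N}(\log N)^{-\kappa-1}$.

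The principal obstacle is therefore the stationary anti-concentration of $V$: unlike the one-sided machinery that produces Theorem \ref{th:typical_height}, here one must exhibit \emph{generic roughness} of the interface. The cycle case of \cite{levine2018long} profited from explicit random walk formulas that pin down the stationary fluctuation scale; for a general quasi-regular base one should expect only a more robust argument that extracts the scale $\sqrt{\tau_N}$ from the spectral gap of $G$, likely at the cost of further logarithmic slack—which is precisely what the ``up to logarithmic factors'' qualifier in the Conjecture anticipates.
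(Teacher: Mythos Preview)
The statement you are attempting to prove is a \emph{Conjecture}: the paper does not prove it, and there is no paper proof to compare your proposal against. Your task was therefore to attack an open problem, and your write-up should be read as a proof \emph{strategy} rather than a proof.

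Your outline identifies the natural distinguishing statistic, and the deterministic bound $V(A'^*(t)) \le V(\cS(A_0')) + t$ from a near-flat start is correct. You are also right that the crux is the stationary lower bound $\mu_N\big(V \ge c\, N\sqrt{\tau_N}/\mathrm{polylog}\,N\big) \ge 1 - N^{-\gamma}$: the paper's machinery (Lemma~\ref{le:lowE}, Theorem~\ref{th:typical_height}) only yields \emph{upper} bounds on stationary height and excess height, and nothing in the paper gives roughness from below. Your heuristic about $N/\sqrt{\tau_N}$ independent patches of width $\sqrt{\tau_N}$ is plausible but, as you acknowledge, making it rigorous for a general quasi-regular base using only $\tau_N$ is exactly the open content of the conjecture. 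So the proposal does not close the gap; it localises it.

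Two concrete issues in the reduction step, independent of the main obstacle. First, the ``dummy block placed sufficiently high'' is both unnecessary and harmful: placing it high raises $h(A_0')$ and ejects $A_0'$ from $\Omega_\gamma$. Since $A_0 \in \Omega_\gamma$ forces $|A_0| \le N\,c_\gamma \sqrt{\tau_N}(\log N)^2$, you can simply take $A_0' = R_{\lfloor |A_0|/N\rfloor}$ together with a partial row at the next level; this gives $|A_0'|=|A_0|$, $V(\cS(A_0'))<N$, and $A_0' \in \Omega_\gamma$. Second, and more seriously, your claim that ``stationarity of $\mu_N$ \ldots keeps $V(A^*(t))$ close to its stationary mean'' does not apply once $A_0$ is a \emph{fixed} element of $\Omega_\gamma$: the process started from a deterministic $A_0$ is not stationary. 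You need an argument that $V(A^*(t))$ cannot drop too fast, i.e.\ a bound on the number of levels completed in $t$ steps. One route is to average: if the conjectured lower bound holds, then for $A_0\sim\mu_N$ one has $A^*(t)\sim\mu_N$, hence $V(A^*(t))$ is large with high probability; now condition on $A_0\in\Omega_\gamma$ (probability $\ge 1-N^{-\gamma}$) and extract a single $A_0$ for which the conclusion persists. Either way this step needs to be written out, not asserted.
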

In fact, as will become clear from the proof of Theorem \ref{th:main_teo}, our methods give a bound on the time it takes for an IDLA process to forget \emph{any} initial state. More explicitly, an IDLA process starting from an arbitrary profile $A_0 \in \Omega$ with height $h_0$ forgets its initial profile in $\cO ( N h_0 + N \sqrt{\t_N} (\log N)^2)$ steps, with high probability. 

\subsection*{Structure of the paper}
The remainder of the paper is organised as follows. 
In Section \ref{sec:abelian} we introduce the Abelian property of IDLA, and explain how it relates to couplings of IDLA processes. In Section \ref{sec:preliminary} we collect two preliminary results on the mixing properties of a simple random walk on $\mathbb{G}$.
In Section \ref{sec:big} we recall the coupling ideas introduced in \cite{levine2018long}, and adapt them to bound the maximal fluctuations for IDLA on $\bbG$ for polynomial times (cf.\  Theorem~\ref{th:Tpoly}). This is the core of the article. 
Finally, in Section \ref{sec:brief} we briefly explain how to deduce Theorems \ref{th:typical_height} and \ref{th:main_teo} along the lines of \cite{levine2018long}, leaving the details to the reader. 

\subsection*{Acknowledgement}
I am thankful to Lionel Levine for several insightful discussions, and for a careful reading of an earlier version of this note. I am also indebted with Alexandre Stauffer for pointing me to Pitman's construction of Markov chains optimal couplings, and for many stimulating discussions on this and related models. 

\section{On couplings of Abelian processes}\label{sec:abelian}
Internal DLA was introduced in the mathematical literature by Diaconis and Fulton \cite{diaconis1991growth} as an example of an Abelian growth model. In this section we introduce the Abelian property, and comment on the role it plays in the construction of a valid coupling of two IDLA processes. 
\subsection{The Abelian property}
There are several equivalent ways to state the Abelian property of IDLA: in this note we opt for the \emph{stack of instructions} point of view \cite{diaconis1991growth,rolla2012absorbing,levine2020preparation}. Let $\bbG$ be a given graph (in our case $\bbG = G \times \bbZ$), with vertex set $\bbV$ and edge set $\bbE$. To each vertex $x\in \bbV$ of the graph we associate an infinite stack of instructions 
	\[ \xi_x = ( \xi_x(k))_{k\geq 1},\] 
with $\xi_x (k)$ neighbour of $x$ in $\bbG$ for all $k\geq 1$. 
For the moment we think of the stack of instructions $\xi = (\xi_x)_{x\in \bbV}$ as fixed, so there is no randomness in the model.  

Given an initial configuration $\eta : V \to \bbN$ of particles, where $\eta (x) $ denotes the number of particles at $x$, we say that $\eta $ is unstable at $x$ if $\eta (x) \geq 2$. A configuration $\eta$ is unstable if there exists $x \in V$ such that $\eta $ is unstable at $x$. We stabilise $\eta $ as follows. At each discrete time step $t\geq 1$, we pick an unstable vertex $x \in V$, and move one particle from $x$ to an adjacent vertex according to the first unused instruction at $x$. More explicitly, if $k_t$ denotes the number of visits to $x$ up to and including time $t$, we move one particle from $x$ to $\xi_x(k_t)$. We refer to this operation as \emph{toppling} the vertex $x$. We keep toppling unstable vertices until $\eta$ becomes stable, that is $\eta (x) \leq 1$ for all $x\in \bbV$.  

A given sequence of topplings $\a = (x_ 1, x_2 \ldots x_n )$ is said to be \emph{legal} for a configuration $\eta$ if for any $2\leq k \leq n$ the vertex $x_k$ is unstable for the configuration obtained by toppling $\eta $ at $x_1 , x_2 \ldots x_{k-1}$. If a legal sequence $\a$ stabilises $\eta$, we write $\cS_\a (\eta )$ for the stabilization of $\eta$ according to $\a$. 
\begin{Theorem}[Abelian property]
If $\a = (x_1 , x_2 \ldots x_n)$ and $\beta = (y_1 , y_2 \ldots y_n )$ are two finite, legal stabilising sequences for $\eta$, then they are permutations one of the other, and $\cS_\a (\eta ) = \cS _\b (\eta )$. 
\end{Theorem}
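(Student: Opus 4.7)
The plan is to prove the Abelian property via a \emph{least action principle}: for any legal sequence $\a$ for $\eta$ and any legal stabilising sequence $\b$ for $\eta$, the toppling counts
\[
n_x(\a) := \#\{i : \a_i = x\}, \qquad x \in \bbV,
\]
satisfy $n_x(\a) \leq n_x(\b)$ for every $x$. Applying this statement twice, with the roles of $\a$ and $\b$ swapped (both being stabilising), forces $n_x(\a) = n_x(\b)$ for all $x$, which shows that $\a$ and $\b$ are permutations of one another.

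The first step is to record an elementary book-keeping identity valid for any legal sequence $\a$:
\[
\cS_\a(\eta)(x) \;=\; \eta(x) - n_x(\a) \;+\; \sum_{y \in \bbV} \#\bigl\{1 \leq k \leq n_y(\a) : \xi_y(k) = x \bigr\}.
\]
This follows directly from the toppling rule, since each toppling at $y$ removes one particle from $y$ and deposits one at the site indicated by the next unused instruction at $y$. Importantly, the right-hand side depends on $\a$ only through the counts $(n_z(\a))_{z \in \bbV}$, so once the equality $n_z(\a) = n_z(\b)$ for all $z$ is established, we immediately obtain $\cS_\a(\eta) = \cS_\b(\eta)$.

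I would then prove the least action principle by induction on $|\a|$. The base case $|\a|=0$ is trivial. For the inductive step, write $\a = (x_1, \ldots, x_k)$, let $\a' = (x_1, \ldots, x_{k-1})$, and set $y = x_k$; the inductive hypothesis gives $n_z(\a') \leq n_z(\b)$ for every $z$, so it suffices to show $n_y(\a') < n_y(\b)$. Suppose for contradiction that $n_y(\a') = n_y(\b)$. Applying the identity above at $x = y$, the self-terms $-n_y$ cancel in the comparison between $\cS_{\a'}(\eta)(y)$ and $\cS_\b(\eta)(y)$, while the remaining arrival sum is manifestly monotone in $(n_z)_z$; hence
\[
\cS_{\a'}(\eta)(y) \;\leq\; \cS_\b(\eta)(y) \;\leq\; 1,
\]
the last inequality because $\b$ stabilises $\eta$. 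This contradicts the legality of $(x_1, \ldots, x_k)$, which requires $\cS_{\a'}(\eta)(y) \geq 2$.

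The main subtle point, and what I expect to be the principal obstacle, is precisely this monotonicity step: the full expression for $\cS_\a(\eta)(x)$ is \emph{not} monotone in the toppling counts because of the $-n_x(\a)$ term, so a naive comparison between $\cS_{\a'}(\eta)$ and $\cS_\b(\eta)$ fails. The argument succeeds only because under the specific assumption $n_y(\a') = n_y(\b)$ the conflicting self-term drops out, leaving only the monotone arrival sum at the site $y$. Once this is handled, both conclusions -- that $\a$ and $\b$ are permutations of each other and that $\cS_\a(\eta) = \cS_\b(\eta)$ -- follow at once from the book-keeping identity and the equality of toppling counts.
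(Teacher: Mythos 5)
Your proof is correct. The paper itself states the Abelian property without proof, citing the literature, and your argument --- the bookkeeping identity expressing $\cS_\a(\eta)$ in terms of the toppling counts $(n_x(\a))_x$, plus the least action principle proved by induction on the length of $\a$, with the key observation that the non-monotone self-term $-n_y$ cancels exactly when $n_y(\a')=n_y(\b)$ --- is precisely the standard proof found in those references (Diaconis--Fulton and its descendants), carried out correctly here.
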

In particular, this tells us that the stabilization of a given configuration $\eta$ according to a given stack of instructions $\xi = (\xi_x)_{x\in \bbV}$ does not depend on the sequence of topplings. This is particularly useful when randomness is introduced in the story. 

\subsection{Couplings preserving the Abelian property}
With IDLA in mind, we randomize the stack of instructions $\xi = ( \xi_x )_{x \in \bbV}$ as follows. Take $( \xi_x )_{x \in \bbV}$ to be i.i.d., each consisting of i.i.d.\ instructions. For example, for $\bbG = G\times \bbZ$ we could generate a random stack of instructions by letting a simple random walk on $\bbG$ (as described in the introduction) run forever, and recording its jumps. 

In this setting, the Abelian property tells us that if $\xi = ( \xi_x )_{x \in \bbV}$ and $\xi '= ( \xi '_x )_{x \in \bbV}$ are two identically distributed random stacks, then stabilizing a given configuration $\eta $ with respect to $\xi$ and $\xi '$ will result in identically distributed stable configurations. 

This suggests a way to couple two IDLA processes on $\bbG$. Indeed, we could first drop $n$ particles at level $0$ in an independent fashion according to $\pi_N$, and then see an IDLA process on $\bbG$ as the stabilization of such a particle configuration. Then to couple two IDLA processes starting from the same configuration it suffices to couple the corresponding stacks of instructions, which clearly preserves the Abelian property. 

While this is an explicit and natural way to couple two IDLA processes, we often want to describe the IDLA dynamics in terms of particle trajectories, as we did in the introduction. Thus we release particles one by one, and let them walk on $\bbG$ until they find an unoccupied site, where they settle. Note that recording the steps of the walks generate a stack of instruction on the visited sites. To couple two IDLA processes, then, rather than coupling the stacks of instructions one could instead couple the random walks trajectories. While this is a valid approach, and we will adopt it in this note, it should be implemented with some care. To see why, fix two arbitrary IDLA initial clusters $A(0)$ and $A'(0)$ (we are not ruling out that $A(0) = A'(0)$). To build $A(1)$, $A'(1)$ we start two simple random walks $\o = (\o (k))_{k\geq 0}$, $\o ' = (\o '(k))_{k\geq 0} $ from level $0$ in $A(0)$ and $A'(0)$ respectively. Imagine that the two walks are coupled in such a way that $\o '$ looks into the future of $\o$ in order to decide where to move. For example, if $G = \bbZ_N$ is the $N$-cycle, then we could set $\o '(k) - \o '(k-1) = \o (k+1) - \o (k) $ (mod $n$) for all $k\geq 1$. The walks are stopped upon exiting their respective clusters. Then, because  $\o '$ is allowed to look into the future of $\o$, $\o '$ might reveal stack instructions that $\o $ did not use. This creates correlations among subsequent walkers in the same IDLA process, since the sets of instructions they explore are not disjoint, and it thus invalidates the coupling. 

A simple way to overcome this problem is to force the random walks coupling to be co-adapted, that is a walker can look in the past of the other, but not in the future (see \cite{aldous1987strong,burdzy2000efficient,connor2008optimal} and references therein). This guarantees that subsequent walkers in the same IDLA process use disjoint sets of instructions. On the other hand, requiring a coupling to be co-adapted could, a priori, substantially increase the coupling time \cite{anil2001coupling}. To avoid this, in Section \ref{sec:preliminary} we build an explicit coupling that is not co-adapted, but with the property that different walkers in the same IDLA process use disjoint sets of instructions. 
Our construction follows Pitman \cite{pitman1976coupling} (see also \cite{griffeath1975maximal} and references therein), and it achieves a coupling time that is within a constant factor from the optimal one.

\section{Preliminaries}\label{sec:preliminary}
We collect here two preliminary results which were proved in \cite{levine2018long} for
 $G=\bbZ_N$. To start with, note that for any $\e >0$ it is easy to check that 
$\tau_N(\e ) \leq \lceil \log_2 (1/\e ) \rceil \tau_N $, 
which implies that
 	\begin{equation}\label{tt}
 	\tau_N (N^{-\g })  \leq 3\g \tau_N \log N 
 	\end{equation}
 for any $\g >0$. 
Let $\o = (\o (t))_{t\geq 0}$  be a simple random walk on $\mathbb{G}$, as described in the introduction, and let 
	\[ 
	 \tau^\bbZ_n := \inf \{ t\geq 0 : \o (t) \in G\times \{ n\} \} 
	 \]
denote the first time the walk reaches level $n$.  For $(v,y) $ vertex of $\bbG$ we denote by $\bbP_{(v,y)}$  the distribution of a simple random walk  on $\bbG$ starting from $(v,y)$. 
\begin{Proposition} \label{pr:mixG}
For any $\g >0$, it holds 
	\[ \max_{v\in V} \bbP_{(v,0)} (\tau_n^\bbZ < 3\g \tau_N \log N ) \leq N^{-\g } \]
for all $n \geq 10 \g  \sqrt{\tau_N} \log N . $
\end{Proposition}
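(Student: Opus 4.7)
My plan is to exploit the product structure $\bbG = G \times \bbZ$ to reduce the statement to a standard one-dimensional random walk tail bound. Writing $\o(t) = (V_t , Y_t)$ for the simple random walk on $\bbG$ starting from $(v,0)$, at each step the walker decides (independently of its current location) to take a vertical step with probability $1/2$ and, conditional on doing so, to step up or down with equal probability. Consequently the vertical coordinate $Y_t$ is itself a lazy simple random walk on $\bbZ$ started at $0$, with i.i.d.\ increments taking values $+1, -1, 0$ with probabilities $1/4, 1/4, 1/2$ respectively; its law does not depend on the horizontal starting vertex. In particular, since $Y_0 = 0$ and $Y$ has unit jumps,
\[
\bbP_{(v,0)} ( \tau_n^\bbZ < t ) = \bbP \big( \max_{s \leq t} Y_s \geq n \big) ,
\]
which is independent of $v$, so the maximum over $v \in V$ in the statement is vacuous.

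The second step is a textbook exponential maximal inequality. The increments of $Y$ are mean zero and bounded by $1$, so by Hoeffding's lemma $\bbE [ \exp(\lambda (Y_t - Y_{t-1})) ] \leq \exp(\lambda^2 /2)$, and therefore $M_t^\lambda = \exp(\lambda Y_t - \lambda^2 t /2)$ is a non-negative supermartingale for each $\lambda > 0$. Doob's maximal inequality applied to $M_t^\lambda$, followed by optimisation in $\lambda$, gives the one-sided sub-Gaussian bound
\[
\bbP \big( \max_{s \leq t} Y_s \geq n \big) \leq \exp \! \big( - n^2 / (2t) \big) .
\]

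To conclude, I substitute $t = 3\g \tau_N \log N$ and $n \geq 10 \g \sqrt{\tau_N} \log N$ into the display to obtain $n^2 /(2t) \geq (50/3)\g \log N$, hence the probability in question is at most $N^{-50\g / 3} \leq N^{-\g}$, with substantial slack. There is no genuine obstacle in the argument: it uses no properties of the base graph $G$ whatsoever, and the mixing time $\tau_N$ enters only through the scales prescribed in the statement. The comfortable gap between $50/3$ and $1$ further indicates that the numerical constants $3$ and $10$ appearing in the proposition are not sharp.
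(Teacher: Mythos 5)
Your proof is correct. Every step checks out: the vertical coordinate is indeed a lazy walk on $\bbZ$ with increments $\pm 1$ w.p.\ $1/4$ and $0$ w.p.\ $1/2$, independent of the horizontal coordinate and of $v$; the Hoeffding--Doob argument gives $\bbP(\max_{s\leq t} Y_s \geq n) \leq e^{-n^2/(2t)}$; and the arithmetic $n^2/(2t) \geq (50/3)\g \log N$ is right. (The only cosmetic slip is writing $\max_{s\le t}$ where $\{\tau_n^{\bbZ} < t\}$ corresponds to $\max_{s<t}$, which only makes your bound an overestimate, so the inequality survives.)

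Your route is, however, genuinely different from the one the paper invokes (Lemma 3.1 of Levine--Silvestri, with $N^2$ replaced by $\tau_N$). That argument decomposes the walk into the embedded non-lazy vertical walk $\hat y$ and i.i.d.\ geometric holding times $G_i$, then treats two regimes separately: a concentration bound for $\sum_i G_i$ shows that many vertical moves force a large elapsed time, while the hitting time of level $n$ by $\hat y$ is controlled through its exact moment generating function $\hat\bbE_0(z^{\hat\tau_n}) = ((1-\sqrt{1-z^2})/z)^n$, followed by a Chernoff bound and an optimisation over $z$. Your approach collapses both steps into a single exponential supermartingale maximal inequality applied directly to the lazy walk, which is shorter, avoids the embedded-walk decomposition and the explicit generating function computation, and makes transparent that nothing about the base graph $G$ is used. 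What the paper's method buys is access to the exact hitting-time tail (hence potentially sharper constants), but since neither argument needs sharp constants here, your version is a legitimate simplification; your closing observation that the constants $3$ and $10$ have slack is consistent with both proofs.
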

This can be proved exactly as in \cite{levine2018long}, Lemma 3.1, replacing $N^2$ by $\tau_N$. It tells us that, by the time a simple random walk on $\mathbb{G}$ has travelled $\sqrt{\tau_N} \log N $ in the vertical coordinate, it has mixed well  in the horizontal one. 
\def\j{
\textcolor{green}{
\begin{proof}
By \eqref{t_epsilon}, it suffices to prove that 
	\[ \max_{x\in V} \bbP_{(x,0)} (\tau_n^y < \tau^x_{mix} \log N^{3\g} ) \leq N^{-\g } \]
for all $n \geq 10 \g \sqrt{\tau^x_{mix}} \log N . $
Let $(\hat{x} (t))_{t\geq 0} $ be the simple random walk on $G$ obtained by only looking at $x(t)$ when it moves. 
Similarly, let $(\hat{y} (t))_{t\geq 0} $ be the simple random walk on $\bbZ$ obtained by only looking at $y(t)$ when it moves.  Define
	\[ \hat{\t}^y_n := \inf\{ t\geq 0 : \hat{y} (t) =n \} .\]
For $i\geq 0$, let $G_i -1$ denote the number of moves in the $x$ coordinates before the $(i+1)^{th}$ move in the $y$ coordinate. Then $(G_i)_{i\geq 0}$ is a collection of i.i.d.\ Geometric random variables of parameter $1/2$, and we have 
	\[ x(\tau^y_n ) = \hat{x} (\hat{\tau}^y_n ) , \qquad 
	\tau^y_n  = \sum_{i=1}^{\hat{\tau}^y_n -1} G_i . \]
It follows that
	\begin{equation} \label{eq:tt*}
	 \begin{split} 
	\max_{x\in V }   \bbP_{(x,0)}  (  \tau^y_n & < \tau^x_{mix} \log N^{3\g}  ) 
	 = \hat{\bbP}_{0} \bigg( \sum_{i=1}^{\hat{\tau}^y_n -1} G_i   < \tau^x_{mix} \log N^{3\g} \bigg) \\
	& = \hat{\bbP}_{0} \bigg(  \sum_{i=1}^{\hat{\tau}^y_n -1} G_i <  \tau^x_{mix} \log N^{3\g } , \; \hat{\t}^y_n > 9 \g \tau^x_{mix} \log N \bigg) 
	\\ & \quad + \hat{\bbP}_{0} \bigg(  \sum_{i=1}^{\hat{\tau}^y_n -1} G_i < \tau^x_{mix} \log N^{3\g}  , \; \hat{\t}^y_n \leq  9 \g \tau^x_{mix} \log N  \bigg) \\
	& \leq \hat{\bbP}_{0} \bigg(  \sum_{i=1}^{8 \g \tau^x_{mix} \log N } G_i < 3 \g \tau^x_{mix} \log N  \bigg) +  \hat{\bbP}_0 \big( \hat{\t}^y_n \leq 9 \g  \tau^x_{mix} \log N  \big) . 
	\end{split} 
	\end{equation}
We estimate the two terms separately: the first one is controlled by large deviations estimates, while the second one by using the explicit expression for the moment generating function of $\hat{\t}^y_n $. More precisely, we have 
	\[ \begin{split} 
	\hat{\bbP}_{0} \bigg(  \sum_{i=1}^{8 \g \tau^x_{mix} \log N } G_i < 3\g \tau^x_{mix} \log N \bigg) & = 
	\hat{\bbP}_{0} \bigg( 2 ^{ - \sum_{i=1}^{8 \g \tau^x_{mix} \log N} G_i } >  2^{-  3 \g \tau^x_{mix} \log N }  \bigg)
	\\ & 
	\leq  \Big[ 2 \bbE ( 2^{- G_1} )^{\frac{8}{3}} \Big]^{3\g \tau^x_{mix} \log N} 
	\leq  \Big( \frac{2}{9} \Big)^{3 \g \tau^x_{mix} \log N } 
	\leq N^{-3\g} .
	\end{split}
	\]
For the second term, it is simple to check that, for $z \in (0,1)$,
	\[ \hat{\bbE}_0 \big( z^{\hat{\t}^y_n } \big) = \frac{1-\sqrt{1-z^2}}{z} . \]
This gives, for any $z \in (0,1)$,
	\[ \begin{split} 
	\hat{\bbP}_0 \big( \hat{\t}^y_n < 9 \g \tau^x_{mix} \log N   \big) 
	& \leq \hat{\bbE}_0 \big( z^{\hat{\t}^y_n } \big)  z^{-9 \g  \tau^x_{mix} \log N  } \\
	& = \bigg( \frac{1-\sqrt{1-z^2}}{z} \bigg)^n \cdot \bigg( \frac{1}{z} \bigg)^{9 \g  \tau^x_{mix} \log N  } . 
	\end{split} \]
Choose $z\in (0,1)$ of the form $z^2 = 1-1/\a^2$ for some $\a \to +\infty $ as $N\to\infty$, to have that 
	\[ \begin{split} 
	\bigg( \frac{1-\sqrt{1-z^2}}{z} \bigg)^n \cdot \bigg( \frac{1}{z} \bigg)^{9 \g  \tau^x_{mix} \log N  } 
	& \leq \Big( 1-\frac{1}{\a} \Big)^{n/2} \, \Big( 1-\frac{1}{\a^2} \Big)^{-\frac{9}{2} \g \tau^x_{mix} \log N } \\ & 
	\leq \exp \Big( -\frac{n}{2\a} + \frac{5\g }{\a^2} \tau^x_{mix} \log N \Big) .
	\end{split} \]
To have the far r.h.s.\ smaller than, say,  $N^{-5\g/4 }$ it suffices to take 
	\[ n \geq \frac{10 \g }{\a} \tau^x_{mix} \log N +  \frac{5}{2}\g \a \log N .\]
Optimizing over $\a$ suggests to take $\a = 2\sqrt{\tau^x_{mix}}$, to get $n \geq 10 \g  \sqrt{\tau^x_{mix}} \log N$. 
\end{proof}
}
}

Proposition \ref{pr:mixG} can be used to show that, as long as the walkers have time to mix in the horizontal coordinate before exiting the cluster, the IDLA process does not depend too much on their initial positions. 
\begin{Proposition} \label{pr:coupling}
Fix any $T>0 $  such that $T\leq N^m$ for some finite $m \in \bbN $. Let $\{ (v_i,y_i) \}_{1\leq i \leq T }$ and $\{ (v'_i , y'_i ) \}_{1\leq i \leq T }$ denote two fixed collections of vertices of $\mathbb{G}$ such that $y_i , y'_i \leq 0$ for all $i$. Let, moreover, $A=(A(t))_{t\leq T}$ and $A'=(A'(t))_{t\leq T}$ be two IDLA processes with starting configurations $A(0) = A'(0)$, such that the $i^{th}$ walker in $A$ (respectively $A'$) starts from $(v_i,y_i)$ (respectively from $(v_i',y_i')$). Then there exists a coupling of $A, A'$ such that the following holds. For any $\g >0$ there exists a finite constant $b_{\g , m }$, depending only on $\g$ and $m$, such that if 
	\begin{equation}\label{A0}
	 A(0) = A'(0) \supseteq R_{b_{\g , m } \sqrt{\t_N} \log N } , 
	 \end{equation}
then 
	\begin{equation} \label{AT}
	 \bbP ( A(t) = A'(t) \mbox{ for all }t\leq T ) \geq 1 - N^{-\g }. 
	 \end{equation}
\end{Proposition}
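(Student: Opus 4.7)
The plan is to construct, for each $i \leq T$, a coupling of the $i$-th walks $\omega^i,\omega'^{i}$ starting at $(v_i,y_i)$ and $(v'_i,y'_i)$ so that, with very high probability, they coalesce at a common vertex before either of them leaves the rectangle $R_{n^*}$, where $n^* := b_{\g,m}\sqrt{\t_N}\log N$ and $b_{\g,m}$ is to be chosen. Once coalesced, the two walks share identical future trajectories and hence settle at the same site. Since $A(0)=A'(0) \supseteq R_{n^*}$, neither walk can settle before leaving $R_{n^*}$, so the clusters are unchanged during the coalescing phase. By the Abelian property of IDLA (Section~\ref{sec:abelian}), agreement of all settling locations forces $A(t)=A'(t)$ for every $t\leq T$.

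Each walker pair is coupled by synchronising the vertical and horizontal coordinates separately: a reflection-type coupling (with a small parity adjustment) on the vertical lazy SRW on $\bbZ$, and a Pitman-type coupling on the horizontal lazy SRW on $G$, of the kind outlined at the end of Section~\ref{sec:abelian}. This construction matches the horizontal coordinates within a constant factor of the TV-mixing time $\t_N(\e)$ and, crucially, is arranged so that distinct walker pairs draw from disjoint portions of the instruction stacks. This is the delicate point: a naive coupling could pre-consume stack instructions that a later walker (in the same IDLA process) would need, creating dependencies that invalidate Abelianness. With the Pitman-type construction in place, the per-walker analyses remain independent.

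To bound the per-walker failure probability, set $b_{\g,m} := 10(m+\g+2)$. By Proposition~\ref{pr:mixG}, each walk stays strictly below level $n^*$ for at least $3(m+\g+2)\t_N \log N$ steps with probability $\geq 1 - N^{-(m+\g+2)}$ (the bound is monotone in the starting vertical coordinate, so it applies uniformly to any $y_i,y'_i \leq 0$). Over this horizon the horizontal projection of each walk takes order $\t_N \log N$ steps of lazy SRW on $G$, which by \eqref{tt} exceeds the mixing time $\t_N(N^{-(m+\g+2)})$; Pitman's coupling then coalesces the horizontal coordinates within the window with failure probability at most $N^{-(m+\g+2)}$. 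A similar and simpler argument handles the vertical reflection coupling, whose meeting time is dominated by the time spent inside $R_{n^*}$. Summing the $O(1)$ failure modes per walker pair and taking a union bound over the $T \leq N^m$ pairs yields total failure probability $\leq CN^{m}\cdot N^{-(m+\g+2)} \leq N^{-\g}$ for $N$ large, as required.

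The main obstacle is compatibility of the coupling with the Abelian property. A standard optimal coupling of two SRWs on $G$ may depend on future steps of one of the walks, and since those future steps correspond to stack instructions that later walkers in the same IDLA process will also consume, this would break the Abelian argument used to conclude $A(t)=A'(t)$. The resolution is precisely the Pitman-type construction highlighted in Section~\ref{sec:abelian}: it couples two lazy SRWs on $G$ within a constant factor of the optimal TV-mixing time without peeking into the future of either walk, and this is exactly what is needed to keep the instruction stacks of successive walkers disjoint in both processes.
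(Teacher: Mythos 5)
Your overall architecture---per-walker couplings that coalesce inside $R_{n^*}$, a Pitman-type non-co-adapted horizontal coupling to keep the instruction stacks of successive walkers disjoint, Proposition~\ref{pr:mixG} to guarantee enough horizontal steps before level $n^*$ is reached, and a union bound over the $T\leq N^m$ pairs---is the paper's, and your constant $b_{\g,m}=10(m+\g+2)$ is essentially the one chosen there. The gap is in your treatment of the vertical coordinate. You propose a reflection coupling of the two vertical walks and assert that its "meeting time is dominated by the time spent inside $R_{n^*}$." This fails when $|y_i-y'_i|$ is large compared with $n^*-\max(y_i,y'_i)$, a regime the statement explicitly allows ($y_i,y'_i\leq 0$ are arbitrary) and which actually occurs in the application inside the proof of Theorem~\ref{th:Tpoly}, where one walker of a pair starts at the release level while its partner may start polynomially many levels lower. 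Under reflection the two vertical walks meet only when the higher one descends to the midpoint $(y_i+y'_i)/2$; by gambler's ruin the higher walk instead reaches level $n^*$ (beyond which it can exit its cluster and settle) before descending to that midpoint, with probability tending to $1$ as $|y_i-y'_i|/n^*\to\infty$. So for well-separated starting heights your vertical coupling fails with probability close to $1$, not $N^{-\g}$, and no amount of parity adjustment repairs this: any coupling that contracts the vertical gap by motion must let one walk travel a vertical distance comparable to $|y_i-y'_i|$, during which the higher walk almost surely hits the top.

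The fix, which is what the paper does, is elementary: do not try to contract the vertical distance at all. Let the higher walker stand still until the lower one has climbed to its level (this happens a.s.\ by recurrence of simple random walk on $\bbZ$, and standing still inside the occupied rectangle cannot trigger settling, since only exit locations matter and the two walks need not be synchronized in time), and from then on run the two vertical coordinates synchronously so that they agree forever. All the randomness of the coupling is then carried by the horizontal coordinate, where your Pitman argument applies essentially verbatim: conditionally on the number $s$ of horizontal steps taken before the common vertical coordinate first hits $n^*$, which exceeds $3\g'\tau_N\log N$ except on the event controlled by Proposition~\ref{pr:mixG}, couple the time-$s$ horizontal marginals with failure probability $\| P^s_{v_i}-P^s_{v'_i}\|_{TV}\leq N^{-\g'}$ and fill in the trajectories with independent random walk bridges. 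With this replacement your error accounting goes through unchanged.
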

This tells us that, as long as an IDLA cluster contains a tall enough rectangle,  releasing  $T$ walkers from fixed initial locations below level $0$ or from $\pi_N$-distributed locations at level $0$ results in the same final cluster with high probability. We prove this via a coupling argument, showing that, as long as the rectangle is tall enough, the walkers have time to mix before leaving the cluster. 

\begin{proof}
Let $\g , m $ be given as in the statement, and set $\g ' = \g + m +1$. 
Recall that $R_n$ denotes the rectangle of height $n$, and assume that  $A(0) = A'(0) \supseteq R_n$ for $n =10 \g ' \sqrt{\t_N} \log N$. 

For $i\leq T$, let $\o_i = ( (v_i(k) , y_i(k) ) )_{k\geq 0}$ and $\o'_i = ( ( v '_i(k) , y'_i(k) ) )_{k\geq 0}$ denote the simple random walk trajectories of the $i^{th} $ walkers starting from $(v_i , y_i )$ and $(v_i' , y_i')$ respectively. These are coupled as follows. 
If, say, $y_i < y_i '$, then $\o '_i$ stays in place until $\o_i$ reaches level $y_i'$ (and vice versa if $y_i > y_i'$). We can therefore assume that $y_i = y_i'$ without loss of generality. We make the walks move together in the vertical coordinate, so that 
 $y_i(0) = y_i '(0) $ implies that $y_i(k) = y_i '(k)$ for all $k\geq 0$. 
 
We explain how to generate the trajectories of $\o_1 , \o_1'$ in such a way that they coincide upon reaching level $n$, with high probability. Our construction is based on 
\cite{pitman1976coupling}. The same construction can be used for the remaining walks. 

Under the assumption that both $\o_1 $ and $\o_1'$ start from level $y_1$, generate on the same probability space an auxiliary process $Y=(Y_t)_{t\geq 0}$ on $\bbZ$ starting from $Y_0 = y_1$. The process sojourns at each site for a Geometric random time of mean $2$, and then jumps to a uniform neighbour. This will be the projection of our walkers' trajectories onto the vertical coordinate. The process stops at 
	\[ \t_n^Y = \inf \{ t\geq 0 : Y_t =n\} ,\]
that is upon reaching level $n$. 
Let 
	\[ \t_n^X = \sum_{t=1}^{\tau_n^Y} \mathbf{1} (Y_t = Y_{t-1} ) \]
denote the amount of time the walk stayed still. Then, if $s \geq 3 \g' \tau_N \log N$ is an integer, we know from \eqref{tt} that 
	\[ \| P^s_{v_1} - P^s_{v_1'} \|_{TV} \leq N^{-\g '} \]
for $N$ large enough. 
Hence, conditional on $\{ \t_n^X =s\}$, we can generate on the same probability space a pair of random variables $(X,X') \in G\times G$ such that $X \sim P^s_{v_1}$, $X' \sim P^s_{v_1'}$ and 
$ \bbP (X \neq X') \leq N^{-\g '}$. 
Set 
	\[ \o (\tau_n^Y ) = (X,n) , \qquad \o ' (\tau_n^Y ) = (X' ,n).\]
Finally, given everything else, generate the horizontal steps of the two walkers as simple random walk bridges from $v_1 $ to $X$ (respectively from $v_1'$ to $X'$) in time $s$, independent of each other. 
The trajectories of $\o , \o'$ up to time $\tau_n^Y$ can then be built by following the $Y$ process when it jumps, and following the random walk bridges at times when $Y$ stays in place. 

The fact that this is a valid coupling up to time $\tau_n^Y$ follows from \cite{pitman1976coupling}. Moreover, since $\tau_n^Y$ is a randomised stopping time for both $\o_1$ and $\o_1'$ by construction, the strong Markov property is in force \cite{pitman1976coupling,connor2008optimal}, and hence we can make the walkers stick together after time $\t_n^Y$ by setting 
	\[ \o (k) = \o '(k) \;\; \mbox{ for } k\geq \tau_n^Y . \]
It follows that, provided the coupling is successful, $\o_1$ and $\o_1'$ will exit $A(0)$ and $A'(0)$ at the same location, thus implying that $A(1) = A'(1)$. By Proposition \ref{pr:mixG} the probability of the coupling failing is bounded by 
	\[ \bbP ( \o_1 \mbox{ and } \o '_1  \mbox{ do not couple within } R_n ) 
	 \leq \bbP (  \t_n^Y < 3\g ' \tau_N \log N ) +  
	 \bbP ( X \neq X' ) 
	 \leq 2 N^{-\g '} \]
for $N$ large enough. This procedure can be iterated for $(\o_i , \o_i ')_{2\leq i \leq T}$, and if all the couplings are successful then $A(t) = A'(t) $ for all $t\leq T$. The probability of failure is 
	\[ \bbP ( \exists t \leq T \mbox{ such that } A(t) \neq A'(t) ) \leq 
	2T N^{-\g '} \leq N^{\g} \]
for $N$ large enough. Thus \eqref{AT} holds with $b_{\g ,m} = 10(\g + m +1)$. 
\end{proof}
\smallskip 

\section{IDLA maximal fluctuations}\label{sec:big}
In this section we adapt the \emph{water level coupling} introduced in \cite{levine2018long} to prove the following result on the maximal fluctuations of IDLA on $\mathbb{G}$. 
\begin{Theorem}[Maximal fluctuations for polynomial times] \label{th:Tpoly}
Assume \eqref{mu_spread}. Let $(A(t))_{t\geq 0}$ be an IDLA process on $\mathbb{G}$ starting from the flat configuration $A(0)= R_0 $. 
Suppose that $T=T(N) \leq  N^m$ for some finite $m\in \bbN$. Then for any $\g >0$ there exists a constant  $C_{\g , m}$, depending only on  $\g , \d , \d ' , m $, such that 
	\begin{equation}\label{eq:max} 
	 \bbP \Big(  R_{\frac{T}{N} - C_{\g ,m} \sqrt{\t_N} ( \log N )^2 }
	\subseteq  A(T) \subseteq
	R_{\frac{T}{N} + C_{\g,m} \sqrt{\t_N} ( \log N )^2 } \Big) \geq 1-N^{-\g}  
	\end{equation}
for $N$ large enough. 
\end{Theorem}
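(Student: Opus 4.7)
The plan is to adapt the water level coupling of \cite{levine2018long}, decomposing \eqref{eq:max} into two complementary tasks: the \emph{outer bound} $A(T)\subseteq R_{T/N+C\sqrt{\tau_N}(\log N)^2}$, asserting that no walker settles far above the typical height, and the \emph{inner bound} $R_{T/N-C\sqrt{\tau_N}(\log N)^2}\subseteq A(T)$, asserting that no level well below the top retains a persistent hole. The main ingredients are Proposition \ref{pr:mixG} (horizontal mixing of a single walker as it climbs), Proposition \ref{pr:coupling} (replacement of walker starting positions), and the Abelian property.

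For the outer bound, I would argue inductively in time, partitioning $[0,T]$ into epochs of length $\Delta := N\sqrt{\tau_N}\log N$; during each epoch the cluster is expected to grow by about $\sqrt{\tau_N}\log N$ levels. The inductive hypothesis is that at the start of the $k$-th epoch, $A$ lies below a height exceeding $k\sqrt{\tau_N}\log N$ by at most a fixed multiple of $\sqrt{\tau_N}(\log N)^2$. For the inductive step, a walker that would settle far above the target must first climb at least $\sqrt{\tau_N}\log N$ levels above the current water level, at which point Proposition \ref{pr:mixG} guarantees its horizontal coordinate has mixed; a standard one-dimensional SRW large-deviation estimate in the vertical direction then controls how much higher the walker can reach before hitting the cluster. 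A union bound over $T\leq N^m$ walkers combined with \eqref{tt} delivers the $N^{-\gamma}$ tail: one logarithmic factor in \eqref{eq:max} comes from the horizontal mixing cost, the other from this union bound.

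For the inner bound, fix $h^*:=T/N-C\sqrt{\tau_N}(\log N)^2$ and target the event that level $h^*$ is entirely covered at time $T$. By the Abelian property, first stabilising the $T_0 := (h^*-c\sqrt{\tau_N}\log N)N$ earliest walkers yields an intermediate cluster which, by induction on the height, contains $R_{h^*-c\sqrt{\tau_N}\log N}$ with high probability. Proposition \ref{pr:coupling} now applies: the remaining $\Theta(N\sqrt{\tau_N}(\log N)^2)$ walkers can be coupled with walkers released from $\pi_N$-distributed sites at level $h^*$, in which idealised setting the harmonic measure on any subset of holes at level $h^*$ is approximately proportional to $\pi_N$. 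A coupon-collector-style argument then shows that $\Theta(N\log N)$ such walkers suffice to fill all $N$ sites of level $h^*$, comfortably within the available budget.

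The main obstacle is the circular dependence between the two bounds: the inner bound invokes Proposition \ref{pr:coupling}, which requires a tall filled rectangle (itself an inner bound at a lower height); the outer bound requires that the water level not be too low, in order to stop walkers from wandering too far. I plan to untangle this by joint induction on the height (equivalently on time), starting from the trivial base $A(0)=R_0$, and choosing the constants so that the $\mathcal{O}(N^{m-1})$ inductive error probabilities compound to $N^{-\gamma}$. The extra $\log N$ in \eqref{eq:max} beyond the per-step mixing cost originates precisely from this final union bound.
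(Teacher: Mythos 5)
Your architecture (build a filled rectangle, recycle walkers via the Abelian property and Proposition \ref{pr:coupling}, coupon-collect to fill levels) is close in spirit to the paper's reduction step, but there are two genuine gaps. First, the inner-bound induction does not close as stated. With $T_0=(h^*-c\sqrt{\tau_N}\log N)N$, the claim that stabilising the first $T_0$ walkers yields a cluster containing $R_{h^*-c\sqrt{\tau_N}\log N}=R_{T_0/N}$ is a \emph{zero}-fluctuation inner bound: the cluster then has exactly $T_0$ sites above level $0$, so containment would force $A(T_0)=R_{T_0/N}$ exactly, which fails with overwhelming probability. If you instead invoke the inductive hypothesis with its $C\sqrt{\tau_N}(\log N)^2$ allowance, you must explain why $C$ does not grow across the many inductive stages, and nothing in your sketch prevents accumulation. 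The missing device is the paper's \emph{freezing} construction: release walkers in batches of $a_\gamma N\log N$, stop each batch at the next level, let one walker settle per empty site and freeze the surplus; by Lemma \ref{le:coupon} this produces an \emph{exactly} filled rectangle $R_\ell$ with high probability, and the Abelian property lets the frozen walkers be released afterwards. Coupling the deeply buried frozen walkers with fresh $\pi_N$-distributed walkers via Proposition \ref{pr:coupling} then replaces $A(T)$, up to small total variation error, by a shift of an IDLA cluster grown from flat with only $O(N\sqrt{\tau_N}(\log N)^3)$ particles; the induction is reset to an exact rectangle at each stage, so no constants accumulate, and the inner bound comes out essentially for free.

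Second, the outer bound. A one-dimensional large-deviation estimate for the vertical coordinate cannot control where a walker settles, because the walker is not stopped at a deterministic time: it stops upon first exiting the cluster, and how long that takes depends on the cluster shape, which is exactly what you are trying to bound --- the argument is circular. The correct mechanism is the sparsity of the cluster above the bulk, and it is a statement about the cluster, not about a single trajectory. Writing $\mu_k(t)$ for the expected number of occupied sites at level $k$ after $t$ releases, one shows $\mu_k(t+1)-\mu_k(t)\leq (\delta'/N)\,\mu_{k-1}(t)$, since a walker can only add a site at level $k$ by first reaching level $k-1$ inside the cluster; iterating gives $\mu_k(t)\leq N(1/N)^{k-1}t^k/k!$, which is super-exponentially small in $t/N$ for $k\geq 3t/N$. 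Applied after the reduction above (so with $t=O(N\sqrt{\tau_N}(\log N)^3)$), Markov's inequality yields the outer bound directly, with no epoch induction, no per-walker union bound, and no use of Proposition \ref{pr:mixG} at this stage. Your per-walker mixing heuristic is not wrong in spirit, but to make it quantitative you would end up re-deriving precisely this first-moment recursion.
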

The above theorem, of independent interest,  tells us that the maximal fluctuations for IDLA on $G\times \bbZ$ from the rectangular shape are $\mathcal{O}(\sqrt{\t_N} ( \log N )^2 )$ for polynomially many steps, when starting the process from flat. 
We split the proof into several lemmas, keeping 
Assumption \ref{assumption}  in force throughout. \\

Let us start with the following coupon-collector inequality stating that, starting from flat, after $\mathcal{O}(N\log N)$ releases level $1$ is filled with high probability. 
\begin{Lemma}\label{le:coupon}
Let $(A(t))_{t\geq 0} $ be an IDLA process on $\mathbb{G}$ with $A(0) = R_0$. For arbitrary $\g >0$, let $a_\g = (\gamma +1)/\d  $. Then we have that
	\[ \bbP \Big( A (a_\g N \log N ) \supseteq R_1 \Big) 
	\geq 1-N^{-\g } \]
for $N$ large enough. 
\end{Lemma}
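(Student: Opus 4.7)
The plan is a one-level coupon-collector argument exploiting the fact that $R_0 \subseteq A(t)$ at all times. Because the walker is released at level $0$ and every site in $R_0$ is already occupied, the walker settles at the very first empty site it reaches; in particular, if its initial visit to level $1$ occurs at a vertex $(v,1)$ that is currently empty, it settles there. Hence $R_1 \subseteq A(t)$ as soon as every $v \in V$ has been the first-hit-to-level-$1$ site of at least one of the first $t$ walkers.

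First I would verify that, for a single walker, the horizontal coordinate at the first time level $1$ is reached is distributed as $\pi_N$. Decompose the walk on $\bbG$ into its horizontal projection $\hat{\o}$ on $G$ (a lazy SRW starting from a $\pi_N$-distributed vertex) and its vertical projection $\tilde{Y}$ on $\bbZ$, with an independent Bernoulli schedule $(H_k)_{k\geq 1}$ deciding which coordinate moves at each step. The first-hit time $\tau_1^\bbZ$ is determined by $(H_k)$ and $\tilde{Y}$ alone, so the number $K$ of horizontal steps completed by time $\tau_1^\bbZ$ is independent of $\hat{\o}$. Since $\pi_N$ is stationary for lazy SRW on $G$, evaluating $\hat{\o}$ at any such independent random time $K$ still yields a $\pi_N$-distributed vertex.

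For each $v \in V$, let $T_v$ be the index of the first walker whose initial visit to level $1$ occurs at $(v,1)$. By independence of the walkers and the observation above, $T_v$ is geometric with parameter $\pi_N(v) \geq \d/N$, and by the opening paragraph $(v,1) \in A(T_v)$: either $(v,1)$ has already been settled by an earlier walker, or walker $T_v$ itself settles there. A union bound then gives
\[
\bbP \bigl( \max_{v\in V} T_v > t \bigr) \leq \sum_{v \in V} \bigl(1-\pi_N(v)\bigr)^t \leq N\, e^{-\d t/N},
\]
and substituting $t = a_\g N \log N = (\g+1) N (\log N)/\d$ produces the bound $N^{-\g}$. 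No real obstacle is expected; the only point requiring a moment's care is the distributional identity for the first-hit location, which is a direct consequence of the stationarity of $\pi_N$ under lazy SRW on $G$ and the independence between $\hat{\o}$ and the scheduling/vertical randomness.
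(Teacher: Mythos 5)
Your proposal is correct and takes essentially the same route as the paper: a coupon-collector bound based on the observation that each new walker fills any given empty site $(v,1)$ with probability at least $\pi_N(v)\geq \d/N$, followed by a union bound over $v\in V$. The only difference is that you spell out the justification for this observation (the first-hit location at level $1$ is $\pi_N$-distributed by stationarity of $\pi_N$ under the lazy walk on $G$ and independence of the horizontal projection from the move schedule, and the walker must settle there if the site is empty since $R_0$ is always full), which the paper asserts without detail.
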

\begin{proof}
By Assumption \ref{assumption}, each time a new random walk is released from level $0$ each vertex at level $1$ has probability at least 
	\[ \min_{v\in V } \pi_N(v) \geq \frac \d N \]
of being filled, if empty. Thus 
	\[ \bbP (A (a_\g N\log N ) \nsupseteq R_1) 
	\leq \sum_{v\in V} \bbP ((v,1) \notin A(a_\g N \log N ) ) 
	\leq N e^{- (\g +1 ) \log N } = N^{-\g } . 
	\]
\end{proof}
We use this simple observation to give a coupling proof of Theorem \ref{th:Tpoly}. 
Our proof  generalises the one of  Theorem 1.1 in \cite{levine2018long}, bypassing the lack of the logarithmic fluctuations result via Lemma \ref{le:coupon}. 
\begin{proof}[Proof of Theorem \ref{th:Tpoly}] 
The proof consists of two steps. In Step 1 we use Lemma \ref{le:coupon} together with the Abelian property of IDLA to reduce the analysis to the case $T \leq  \a N \sqrt{\tau_N} (\log N)^2$ for some finite constant $\a$. In Step 2 we then bound the fluctuations of $A(T)$ for such small $T$ by a standard argument due to Lawler et al \cite{lawler1992internal}. \\

\emph{Step 1.} 
If $T\leq \a N \sqrt{\tau_N} (\log N)^2 $ for some $\a < \infty $ then skip to Step 2. 
Otherwise, fix $\g >0$ as in the statement. Let $a_{\g +2m}$ be defined as in Lemma \ref{le:coupon}, and $b_{\g +m+1 , m}$ be as in Proposition \ref{pr:coupling}. We write $a,b$ in place of $a_{\g+ 2m}$, $b_{\g +m+1, m}$ for brevity.  
Define 
	\[ \ell := \left\lfloor \frac{T}{ a N \log N } \right\rfloor, \qquad 
	 t_0 := (a N \log N ) \ell ,\]
and note that $ b \sqrt{\tau_N} \log N +1 \leq \ell \leq N^{m-1} $. 
At cost of increasing $a$ and $b$, we may assume that $t_0$ and $b \sqrt{\tau_N} \log N$ are integers. 

Recall that the IDLA process starts from the flat configuration $A(0) = R_0$. To start with, release $aN \log N $ walkers from level $0$ according to $\pi_N$. We stop the walkers upon reaching level $1$. A walker that reaches level $1$ at an empty site settles there and never moves again, and the site is marked as occupied. Walkers that reach level $1$ at occupied sites, on the other hand, are stopped and declared \emph{frozen}. As the name suggests, the motion of frozen walkers will be resumed at a later stage. Then, by Lemma \ref{le:coupon}, after 
$a N \log N $ releases level $1$ is filled with probability at least $1-N^{-(\g +2m)}$ and all walkers, whether settled or frozen, are at level $1$. On this high probability event, we repeat this procedure by releasing $a N \log N$ additional walkers which stop, with the same rule as before, at level $2$, and so on. Let $W_1$ denote the cluster of occupied sites after releasing $t_0$ walkers from level $0$ in groups of $aN \log N$, stopping walkers of the $k^{th}$ group at level $k$ as described above. Then 
	\[ \bbP \big( W_1 = R_{\ell} \big) 
	\geq 1- \ell  N^{-(\gamma+2m)} 
	\geq 1-N^{-(\g +m+1)} \]
for $N$ large enough.  
We stress that, while $W_1$ is not distributed as an IDLA cluster, the Abelian property implies that unfreezing all the frozen walkers and letting them move until settling at empty sites will result in a cluster with the same distribution as $A(t_0)$. 

On the high probability event $\{ W_1 = R_\ell\}$ all the frozen walkers between levels 
$1$ and $\ell - b\sqrt{\tau_N} \log N$ are at distance at least $b\sqrt{\tau_N} \log N$ from the boundary of $W_1$, so by Proposition \ref{pr:coupling} releasing them has, with high probability, the same effect as releasing the same number of walkers from level $0$ according to $\pi_N$. 
To make this precise, note that on $\{ W_1 = R_\ell\}$ there are exactly 
	\[ t_1 := (aN \log N -N)( \ell - b\sqrt{\tau_N} \log N ) \]
frozen walkers up to level $\ell - b\sqrt{\tau_N} \log N$. Let $W_1(t_1)$ denote the cluster obtained by releasing these $t_1$ frozen walkers. 
Then, if $(A'(t))_{t\geq 0}$ is an auxiliary IDLA process with $A'(0) = R_\ell$, by Proposition \ref{pr:coupling} we can couple $A'(t_1)$ and $W_1 (t_1)$ so that 
	\[ \bbP ( W_1 = R_\ell , A'(t_1) = W_1(t_1) ) \geq 1- 2N^{-(\gamma+m+1)} \]
for $N$ large enough. This suggests a way to couple the original IDLA process $A$ with the auxiliary IDLA process $A'$. Indeed, having built $A'(t_1)$ as above, we can add $T_1 = T - N(\ell - b\sqrt{\tau_N} \log N )$ particles to $A'(t_1)$ using the locations of the frozen particles in $W_1(t_1)$ as starting locations. By the Abelian property, this gives 
	\[ \| A(T) - A'(T_1) \|_{TV} \leq \bbP (W_1 \neq R_\ell ) + \bbP (W_1 = R_\ell , A'(t_1) \neq W_1 (t_1) ) \leq 2N^{-(\g+m+1)}  \]
for $N$ large enough. It thus suffices to prove that \eqref{eq:max} holds with $A'(T_1)$ in place of $A(T)$. Moreover, since the process $A'$ started from $R_\ell$, the downshift of $A'(T_1)$ by $\ell$ levels has the same distribution as $A(T_1)$. This therefore reduces the problem to proving that \eqref{eq:max} holds with $T_1$ in place of $T$. Now, if $T_1 \leq \a N \sqrt{\tau_N} (\log N)^2$ for some $\a <\infty$ stop and move to Step 2. Otherwise, iterate this procedure. Note that each iteration reduces $T$ by at least $a N \log N - N $. Then, after $k< N^{m-1}$ iterations we are left with 
	\[ T_k \leq (a N \log N )(2b\sqrt{\tau_N} (\log N)^2 ), \] 
and 
	\[  \begin{split} 
		 \bbP \Big( & R_{\frac{T}{N} - C_{\g ,m} \sqrt{\t_N} ( \log N )^2 }
	\subseteq  A(T) \subseteq 
	R_{\frac{T}{N} + C_{\g,m} \sqrt{\t_N} ( \log N )^2 } \Big) 
	\geq 
	\\ & 	\geq  \bbP \Big(  R_{\frac{T_k}{N} - C_{\g ,m} \sqrt{\t_N} ( \log N )^2 }
	\subseteq  A(T_k) \subseteq
	R_{\frac{T_k}{N} + C_{\g,m} \sqrt{\t_N} ( \log N )^2 } \Big) -N^{-(\g +2)} , 
	\end{split}\]
for $N$ large enough.   At this point we can move to Step 2. \\

\emph{Step 2.} If $T\leq \a ' N (\log N)^2$ for some finite constant $\a '$ then the conclusion is trivial. Otherwise,  let $(A(t))_{t\geq 0}$ be an IDLA process starting from $A(0)=R_0$. We show that, if $T \leq \a N \sqrt{\t_N} (\log N )^2$ for some $\a$ (which may depend on $\g , m$), then there exists $\b \in (0,1)$, independent of $N$, such that 
	\begin{equation} \label{smallT}
	 \bbP \Big( A(T) \nsubseteq R_{\frac{3T}{N}} \Big) \leq \b^{T/N} 
	 \end{equation}
for $N$ large enough. This would conclude the proof, as the inner bound is trivial since $T/N \leq \a \sqrt{\tau_N} (\log N)^2$. 

To see \eqref{smallT}, let 
 $Z_k (t) := | A(t) \cap \{ y=k\} |$ denote the number of sites in $A(t)$ at level $k$, and set $\mu_k(t) := \bbE (Z_k(t))$. Then 
	\[ \bbP \Big( A(T) \nsubseteq R_{\frac{3T}{N}} \Big)
	 \leq \bbP \Big( Z_{\frac{3T}{N}+1}(T) \geq 1 \Big) 
	\leq \mu_{ \frac{3T}{N}+1} (T) . \]
We claim that 
	\begin{equation} \label{LBG_up}
	 \mu_k(t) \leq N \Big( \frac{1}{N} \Big)^{k-1} \frac{t^{k}}{k!} 
	 \end{equation}
for all $k>0$ and $t\geq 0$. 
Indeed, clearly $\mu_1(t) \leq N$ and $\mu_k(0) =0$  for all $k>0$. For other values of $k,j$  we have 
	\[ \begin{split} 
	\mu_k(t+1) - \mu_k(t) & = \bbE ( Z_k (t+1)  - Z_k (t) ) 
	 \leq \frac{\d'}{N} \bbE ( | A(t) \cap \{ y=k-1\} | ) 
	= \frac{\d'}{N} \mu_{k-1} (t) , 
	\end{split} \]
where in the above inequality we have used that the probability that the $(t+1)^{th}$ walker reaches level $k-1$ inside $A(t)$ is maximised when $A(t)$ is completely filled up to level $k-2$. Thus for $k>0$ 
	\[ \mu_k ( t) = \sum_{s=0}^{t-1} ( \mu_k(s+1) - \mu_k(s) ) \leq 
	\frac{\d'}{N} \sum_{s=0}^{t-1} \mu_{k-1}(s) , \]
and \eqref{LBG_up} follows by a simple iteration. 
Now take $t=T$, $k =\frac{3T}{N}+1$ and recall that $k! \geq k^k e^{-k}$ to get 
	\[ \mu_{\frac{3T}{N}+1} (T) 
	\leq N \Big( \frac{1}{N} \Big)^{\frac{3T}{N} } 
	\frac{T^{\frac{3T}{N}+1} }{(\frac{3T}{N} +1)!} 
	\leq N^2 \Big[ \Big( \frac{e}{3} \Big)^3 \Big]^{\frac{T}{N}}  \leq \b^{T/N} 
	\] 
for any $  \b \in \big(\big(\frac{e}{3}\big)^3 , 1)$ and $N$ large enough. This proves \eqref{smallT}, hence concluding the proof of Theorem \ref{th:Tpoly}. 
\end{proof}

\section{Proof of Theorems \ref{th:typical_height} and \ref{th:main_teo}}\label{sec:brief}
We sketch here the proofs of Theorems \ref{th:typical_height} and \ref{th:main_teo}.  We give precise statements of the several lemmas leading to the proof, but choose to leave the proof details to the reader, since these are straightforward generalisations of the arguments in \cite{levine2018long}. This section also serves as a survey of the ideas introduced for the case $G = \bbZ_N$ in \cite{levine2018long}. 
We split the argument into several steps, for ease of reading. \\

\emph{Step 1: Decay of excess height.} 
For an IDLA cluster $A \in \Omega$, let $|A|$ count the number of vertices in $A$ above level $0$. We define the \emph{excess height} of $A$ by 
	\[ \cE (A) = h(A) - \frac{|A|}{N} ,\]
where $h(A)$ denotes the height of $A$. Thus $\cE (R_k) =0$ for all infinite rectangles $R_k$, while $\cE(A)$ is large if $A$ is tall and sparse. To start with, we show that if the excess height of a cluster is too large, it tends to decrease under IDLA dynamics. 
\begin{Lemma}\label{le:lowE}
Let $(A(t))_{t\geq 0}$ be an IDLA process on $G\times \bbZ$ with $A(0) \supseteq R_0$ and filtration $(\cF_t )_{t\geq 0}$. For $t\geq 0$ let $\cE (t) = \cE (A(t))$ denote the excess height of $A(t)$. Then there exists a constant $C_\cE$, depending only on $\d$,  such that if $\cE(t) > C_\cE N  \sqrt{\tau_N} (\log N)^2$ then 
	\[ \bbE(\cE (t+1) - \cE (t) | \cF_t ) < - \frac 1 {2N} \]
for $N$ large enough. 
\end{Lemma}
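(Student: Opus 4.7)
Since exactly one new particle is added to the cluster at each step, $|A(t+1)| - |A(t)| = 1$ almost surely, and therefore
\[
\bbE\bigl(\cE(t+1) - \cE(t) \bigm| \cF_t\bigr)
= \bbE\bigl(h(A(t+1)) - h(A(t)) \bigm| \cF_t\bigr) - \frac{1}{N}.
\]
Since the height grows by at most one per step, the right-hand side equals $p(A(t)) - 1/N$, where $p(A(t))$ is the conditional probability that the new walker settles at level $h(A(t))+1$, equivalently that it reaches this level before being absorbed. The lemma therefore reduces to proving $p(A(t)) < 1/(2N)$ whenever $\cE(A(t)) > C_\cE N \sqrt{\tau_N}(\log N)^2$.

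To bound $p(A(t))$, I would exploit the $N\cE(A(t))$ empty sites of $R_{h(A(t))} \setminus A(t)$, any one of which can absorb the walker. Writing $h = h(A(t))$, partition $[0, h]$ into consecutive vertical slabs $S_1, \ldots, S_M$ of height $\ell \asymp \sqrt{\tau_N}\log N$, chosen so that Proposition~\ref{pr:mixG} ensures horizontal mixing on the scale of a slab. Let $e_i$ be the number of empty sites of $R_h \setminus A(t)$ lying in $S_i$, so $\sum_i e_i = N\cE(A(t))$. The key ingredient is a \emph{per-slab estimate}: from any horizontal starting position at the bottom of $S_i$, the probability of ever reaching the top of $S_i$ before being absorbed inside it is at most $1 - c\, e_i/N$, for some $c = c(\delta, \delta') > 0$. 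Given this, iteration via the strong Markov property at successive slab-top hitting times yields
\[
p(A(t)) \leq \prod_{i=1}^{M} \Bigl(1 - \frac{c\, e_i}{N}\Bigr) \leq \exp\bigl(-c\, \cE(A(t))\bigr),
\]
which is far smaller than $1/(2N)$ once $C_\cE$ is large enough in terms of $c$.

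The bulk of the work is in the per-slab estimate. Within a slab of height $\ell \asymp \sqrt{\tau_N}\log N$, Proposition~\ref{pr:mixG} ensures that any walker that reaches the top of the slab (or indeed returns to its bottom) must spend $\Omega(\tau_N \log N)$ steps inside it, and hence has time to mix horizontally. A Pitman-style coupling in the spirit of Proposition~\ref{pr:coupling} then compares the walker's horizontal position at successive mixing epochs with samples from $\pi_N$; by quasi-regularity~\eqref{mu_spread}, each such sample lands on any fixed empty site of the slab with probability at least $c/N$. Packaged correctly this gives a lower bound of order $e_i/N$ on the probability of being absorbed during a single ``attempt'' through $S_i$, which converts into the per-slab bound via the gambler's-ruin identity $\bbP(\text{cross before absorbed}) = p_{\mathrm{cross}}/(p_{\mathrm{cross}}+p_{\mathrm{abs}})$. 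The main technical obstacle is bookkeeping the walker's possible re-entries into a slab, and ensuring, through the Abelian property, that the strong Markov inputs across different slabs remain compatible with the IDLA law.
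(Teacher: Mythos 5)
Your opening reduction (to showing that the walker reaches level $h(A(t))+1$ with probability $<1/(2N)$) and your overall mechanism (one near-$\pi_N$ horizontal sample per vertical stretch of order $\sqrt{\tau_N}\log N$, via Proposition \ref{pr:mixG}, multiplied over stretches) are exactly the paper's argument, which it only sketches and defers to Lemma 6.2 of the cited work of Levine et al. However, your key \emph{per-slab estimate} is overstated and would fail as written. First, $e_i$ can be as large as $(N-1)\ell$, so $1-c\,e_i/N$ can be negative, and then it cannot upper-bound a positive crossing probability. Second, and more substantively, aggregating \emph{all} empty sites of a slab into a single gain of order $e_i/N$ is not achievable: the first-hitting horizontal positions of nearby levels inside one slab are strongly correlated (a walker occupying a filled column at level $j$ typically still occupies it at level $j+1$), so two adjacent levels with $e$ empty sites each contribute an absorption probability of order $e/N$, not $2e/N$; only levels separated by $\gtrsim\sqrt{\tau_N}\log N$ give multiplicative, near-independent gains. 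The correct (and entirely sufficient) version is the paper's: call a level \emph{bad} if it contains at least one empty site, note that the $N\cE(t)$ empty sites force at least $\cE(t)>C_\cE N\sqrt{\tau_N}(\log N)^2$ bad levels (each level holds at most $N$ empty sites), extract a subfamily of at least $C_\cE N\log N/20$ bad levels pairwise separated by $20\sqrt{\tau_N}\log N$, and give each a settling probability at least $\d/(2N)$ from the single guaranteed empty site. This yields a survival probability at most $(1-\d/(4N))^{C_\cE N\log N/20}\leq N^{-\d C_\cE/80}\ll 1/(2N)$ for $C_\cE$ large, which is all the lemma needs; with this correction your argument coincides with the paper's.
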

To see this, note that if the excess height is large, then there are many empty sites below level $h(t)$. Call a level below $h(t)$  \emph{bad} if it contains at least one empty site. If the excess height is high enough, then there are many bad levels at distance at least $20 \sqrt{\tau_N} \log N$ one another. Thus a random walker has time to mix in the horizontal coordinate between bad levels, and each time it reaches a new bad level it has probability at least $\d /(2N)$ to settle at an empty site. It follows that, if there are enough bad levels, the random walk will settle below height $h(t)$ with high probability, which will cause the excess height to decrease. We refer the reader to \cite{levine2018long}, Lemma 6.2, for details. 

Lemma \ref{le:lowE} shows that an IDLA process spends a small proportion of time in clusters with high excess height. From this it is standard (cf.\ \cite{levine2018long}, Proposition 6.1) to deduce that stationary clusters have low excess height, which gives the following result. 

\begin{Lemma}\label{le:lowEstationary}
For any $\g >0$ there exists a constant $C_{\cE,\g}$, depending only on $\g , \d $, such that, with $\cE^*_\g = C_{\cE, \g  } \sqrt{\tau_N } N (\log N)^2 $, 
	\[ \mu_N ( \{ A\in \Omega : \cE (A) > \cE^*_\g  \} \big) 
	\leq N^{-\g } \]
for $N$ large enough. 
\end{Lemma}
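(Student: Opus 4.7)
The plan is to combine the negative-drift estimate of Lemma \ref{le:lowE} with a standard Foster--Lyapunov argument to turn a drift above a threshold into an exponential tail under the stationary distribution $\mu_N$.

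First I would verify that the excess height is invariant under the downshift $\cS$: if $A \supseteq R_{k_A}$, then $\cS(A)$ removes exactly $N k_A$ sites (the $k_A$ completely filled levels above $R_0$) and lowers the height by $k_A$, so
\[ \cE(\cS(A)) = \bigl(h(A)-k_A\bigr) - \bigl(|A|-Nk_A\bigr)/N = \cE(A). \]
Consequently, the excess height of shifted IDLA evolves identically in distribution to that of unshifted IDLA, and Lemma \ref{le:lowE} applies verbatim to $\cE(A^*(t))$ with drift at most $-c := -1/(2N)$ above the threshold $M := C_\cE N \sqrt{\tau_N}(\log N)^2$. I would also observe that the increments of $\cE$ are bounded: under one IDLA step a single site is added at a level at most $h(A(t))+1$, so $\cE(t+1)-\cE(t) \in \{-1/N,\,1-1/N\}$.

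With these two ingredients in hand, I would introduce the exponential Lyapunov function $\varphi(x) := e^{\alpha x}$ with $\alpha = \Theta(1/N)$ chosen small enough that Taylor-expanding the $O(1)$ increments gives
\[ \bbE\bigl(\varphi(\cE(t+1)) \,\big|\, \cF_t \bigr) \leq \bigl( 1 - \tfrac{\alpha c}{2} \bigr)\, \varphi(\cE(t)) \quad\text{whenever}\ \cE(t) > M, \]
while the trivial estimate $\bbE(\varphi(\cE(t+1))\mid \cF_t) \leq e^{\alpha}\varphi(\cE(t))$ always holds. Equating $\bbE_{\mu_N}\varphi(\cE(t+1)) = \bbE_{\mu_N}\varphi(\cE)$ by stationarity and splitting by $\{\cE \leq M\}$ versus $\{\cE > M\}$, one rearranges to
\[ \bbE_{\mu_N}\!\bigl[\, \varphi(\cE)\, \mathbf{1}_{\{\cE > M\}} \bigr] \leq \frac{C_0}{\alpha c}\,\varphi(M) = O(N)\, \varphi(M). \]
Markov's inequality then gives $\mu_N(\cE > M+r) \leq O(N)\,e^{-\alpha r}$ for every $r>0$. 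Choosing $r = C_\gamma N \log N$ with $C_\gamma$ large makes the right-hand side at most $N^{-\gamma}$, and the resulting threshold $M+r$ is still $O(N\sqrt{\tau_N}(\log N)^2)$, absorbable in $\cE^*_\gamma$ by enlarging $C_{\cE,\gamma}$.

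The main obstacle is purely bookkeeping: the drift $c$ and the Lyapunov parameter $\alpha$ both scale like $1/N$, so one has to keep careful track of powers of $N$ to ensure that the prefactor $O(N)$ in front of the exponential does not spoil the bound $N^{-\gamma}$, and that the additional $O(N \log N)$ margin needed to reach $N^{-\gamma}$ is dwarfed by $M = \Theta(N\sqrt{\tau_N}(\log N)^2)$. This is the standard Foster--Lyapunov / Hajek computation carried out in Proposition 6.1 of \cite{levine2018long}, which can be followed line by line once Lemma \ref{le:lowE} and the shift-invariance of $\cE$ are in hand.
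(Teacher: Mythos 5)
Your proposal is correct and follows essentially the same route as the paper, which likewise deduces the lemma from the drift bound of Lemma \ref{le:lowE} via the standard Foster--Lyapunov/Hajek argument of \cite{levine2018long}, Proposition 6.1; your preliminary observations (shift-invariance of $\cE$ and the bounded increments $\cE(t+1)-\cE(t)\in\{-1/N,\,1-1/N\}$) are exactly the inputs needed to apply that argument. The only quibble is that with $\alpha,c=\Theta(1/N)$ the prefactor $C_0/(\alpha c)$ is $O(N^2)$ rather than $O(N)$, but this is harmless since it is absorbed by taking $r=C_\gamma N\log N$ with $C_\gamma$ large.
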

\smallskip

\emph{Step 2: From low excess height to low height.}
Let $(A(t))_{t\geq 0}$ be an IDLA process starting from a stationary configuration $A(0) \sim \mu_N$, and denote the associated shifted IDLA process by $(A^*(t))_{t\geq 0}$. While Lemma \ref{le:lowEstationary} shows that $A(0)$ has low excess height with high probability, this does not imply low height, since $|A(0)|$ is random. It gives, on the other hand, an upper bound on the number of empty sites, that is the number of sites in $R_{h(A(0))} \setminus A(0)$. 
This allows us to show that the height of the associated shifted IDLA process becomes $\cO (N^4 \sqrt{\tau_N}  ) $ within polynomial time, as stated below. 
\begin{Lemma}\label{le:lowEtolowH}
Let $(A^*(t))_{t\geq 0}$ be a shifted IDLA process with $A(0) \sim \mu_N$, and let 
	\[ T^* = \inf\{ t\geq 0 : h(A^*(t)) \leq N^4 \sqrt{\tau_N} \} . \]
Then for any $\g >0$ there exists a constant $C_{\cE , \g }^* $, depending only on $\g , \d$, such that
	\[ \bbP ( T^* > N^6 \sqrt{\tau_N} ) \leq N^{-\g} \]
for $N$ large enough. 
\end{Lemma}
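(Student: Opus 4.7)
The plan is to combine the static bound of Lemma~\ref{le:lowEstationary} with a dynamic drift argument for the shifted cluster. Write $\cE(t) = \cE(A^*(t))$ and $H(t) = h(A^*(t)) = \cE(t) + |A^*(t)|/N$, and fix $\gamma > 0$. By Lemma~\ref{le:lowEstationary} applied with parameter $\gamma+2$, the event $\{\cE(0) \leq \cE^*_{\gamma+2}\}$ has probability at least $1-N^{-(\gamma+2)}$, and I would condition on it throughout.

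The first step is to propagate the excess-height bound over the time window $[0, N^6\sqrt{\tau_N}]$. By Lemma~\ref{le:lowE}, $\cE(t)$ has conditional drift at most $-1/(2N)$ outside the region $\{\cE \leq C_\cE N\sqrt{\tau_N}(\log N)^2\}$, and its one-step increments are trivially bounded by $1$. A standard optional-stopping and Azuma--Hoeffding argument, stopping at each return to the low excess-height region, then yields $\max_{t\leq N^6\sqrt{\tau_N}} \cE(t) \leq 2\cE^*_{\gamma+2}$ with probability at least $1-N^{-(\gamma+2)}$.

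The main step is to establish a strictly negative drift for $|A^*(t)|$ when the height is large. Since $\cE(t)$ is already controlled by Step~1, showing $H(t)\leq N^4\sqrt{\tau_N}$ reduces to showing $|A^*(t)|/N \leq N^4\sqrt{\tau_N}/2$, say. Writing the one-step increment as $|A^*(t+1)|-|A^*(t)| = 1 - N\Delta k(t)$, where $\Delta k(t)\geq 0$ is the number of new complete bottom levels added at step $t+1$, it suffices to show $\bbE[\Delta k(t)\mid \cF_t] \geq (1+c)/N$ for some $c>0$ whenever $H(t)$ exceeds the target scale. When the cluster contains a tall enough rectangle $R_n$ with $n\gtrsim \sqrt{\tau_N}\log N$, Proposition~\ref{pr:mixG} together with the coupling in Proposition~\ref{pr:coupling} imply that a walker released at level~$0$ mixes in the horizontal coordinate before leaving the cluster, so that each visit to the lowest empty level occurs at an approximately $\pi_N$-distributed site, and by Assumption~\ref{assumption} fills a given empty site there with probability $\Theta(1/N)$. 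When $H(t)$ far exceeds the mixing scale, walkers moreover make many returns to the lowest empty level before exiting, which should boost $\bbE[\Delta k(t)\mid \cF_t]$ strictly above the baseline $1/N$.

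Combining the two drift bounds via a standard Lyapunov-function and hitting-time argument (e.g.\ Freedman's inequality applied to $|A^*(t)|$ with increments truncated at an auxiliary stopping time, to handle the unbounded negative jumps caused by large shifts) then yields the desired estimate $\bbP(T^* > N^6\sqrt{\tau_N}) \leq N^{-\gamma}$. I expect the main obstacle to be the quantitative lower bound $\bbE[\Delta k(t)\mid \cF_t] \geq (1+c)/N$: for the cycle base $G=\bbZ_N$, \cite{levine2018long} derives this from explicit estimates for the harmonic measure of simple random walk on the cylinder over the cycle; in our quasi-regular setting these must be replaced by arguments based only on the mixing time $\tau_N$, via Propositions~\ref{pr:mixG} and \ref{pr:coupling}, at the cost of worse constants but without loss of the qualitative conclusion.
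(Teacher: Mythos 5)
Your overall decomposition $h(A^*(t))=\cE(t)+|A^*(t)|/N$ and your Step 1 (propagating the bound $\cE(t)\leq 2\cE^*_{\g+2}$ over the time window via Lemma \ref{le:lowE} and Azuma) are consistent with what the paper does. The gap is in your main step. The pointwise drift bound $\bbE[\Delta k(t)\mid \cF_t]\geq (1+c)/N$ is simply false: $\Delta k(t)\geq 1$ requires the $(t+1)^{th}$ particle to fill the \emph{last} remaining hole of level $k_{A(t)}+1$, so $\Delta k(t)=0$ almost surely whenever that level still has two or more empty sites. (Concretely, if $A(t)=R_0$ plus a thin column of height $H$, then $h(A^*(t))=H$ and $|A^*(t)|$ \emph{increases} at essentially every step for order $N$ steps before a level completes; $|A^*(t)|$ has no negative drift even though the height is huge.) The "many returns to the lowest empty level" heuristic boosts the probability of filling one particular empty \emph{site}, not the probability of completing a whole \emph{level}, so it cannot rescue the inequality. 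You correctly identified this as the weak point, but no drift estimate of this form is available, and the Lyapunov/Freedman machinery built on it does not get off the ground.

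The paper's mechanism is different and more elementary: it is a hole-counting and geometric-waiting-time argument, not a drift argument for $|A^*(t)|$. On the event $\cE(A(0))\leq \cE^*_{\g+2}$ there are at most $N\cE^*_{\g+2}$ empty sites in $R_{h(A(0))}\setminus A(0)$. The key observation is that each released walker fills the \emph{lowest} empty site with probability at least $\d/N$, independently of everything else: to settle anywhere the walker must first reach the level of that site, and since its horizontal coordinate starts from $\pi_N$ and evolves independently of the vertical one, its horizontal position at the first visit to that level is exactly $\pi_N$-distributed (no appeal to Propositions \ref{pr:mixG} or \ref{pr:coupling}, nor to harmonic-measure estimates, is needed here). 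Hence each hole is filled after at most a $\mathrm{Geom}(\d/N)$ number of releases; summing at most $O(N\cE^*_{\g+2})$ such waiting times, and restarting whenever the excess height exceeds $2\cE^*_{\g+2}$ (using Lemma \ref{le:lowE} to bound the time spent waiting for it to come back down), one fills every hole below the running top within $N^6\sqrt{\tau_N}$ releases with high probability, at which point $k_{A(t)}$ has caught up with $h(A(t))$ up to the allowed $N^4\sqrt{\tau_N}$. You should replace your drift step by this coupon-collector argument (see \cite{levine2018long}, Proposition 6.2, for the bookkeeping).
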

The above result is proved as follows. On the high probability event $\cE (A(0)) \leq \cE^*_{\g+2}$ there are at most $N \cE^*_{\g +2}$ empty sites below level $h(A(0))$. Each time we add a new particle to the cluster, the lowest empty site has probability at least $\d /N$ to be filled, independently of everything else. Hence it will take at most a geometric number of releases of parameter $\d /N$ to fill it. We fill empty sites sequentially while the excess height does not exceed $2\cE^*_{\g+2}$. When it does, we wait for the dynamics to bring the excess height below $\cE^*_{\g+2}$, and try again to fill all the empty sites. It is easy to see that, with high probability, this procedure succeeds within $N^6 \sqrt{\tau_N}$ particle releases. See \cite{levine2018long}, Proposition 6.2, for details. 
\begin{Remark}\label{rem:positiverec}
The same argument shows that shifted IDLA defines a positive recurrent Markov chain, since starting from the flat configuration $R_0$,  
the chain comes back to a certain finite set containing $R_0$ in finite time.
\end{Remark}

\emph{Step 3: Forgetting polynomially high initial configurations.} 
Lemma \ref{le:lowEtolowH} tells us that stationary clusters have $\cO (N^6 \sqrt{\tau_N})$ height with high probability. We use the polynomial height bound to show that an IDLA process starting from a stationary configuration forgets that it did not start from flat within polynomially many steps. 
\begin{Proposition}\label{pr:step3}
Let $A_0, A_0' \in \Omega $ be such that $|A_0| = |A'_0|$, $A_0' \supseteq R_{\lfloor |A_0|/N \rfloor }$  and 
	\[ h(A_0) \leq \alpha N^6 \sqrt{\tau_N} , \qquad 
	h(A_0') \leq \frac{|A_0|}{N} +1 \]
for some absolute constant $\alpha<\infty$. Let $(A(t))_{t\geq 0}$ and $(A'(t))_{t\geq 0}$ denote two IDLA processes starting from $A_0$ and $A_0'$ respectively, and denote the laws of $A(t)$, $A'(t)$ by $P(t) , P'(t)$. 
Then for any $\g >0$ there exists a constant $c_\g$, depending only on $\g , \d , \d' $, such that, writing $s_\g = \a N^7 \sqrt{\tau_N} + c_\g N \sqrt{\tau_N} (\log N )^2$ for brevity, we have 
	\[ \| P(s_\g ) - P'(s_\g ) \|_{TV} \leq N^{-\g} \]
for $N$ large enough. 
\end{Proposition}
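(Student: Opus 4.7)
The plan follows the strategy of \cite{levine2018long}, Proposition~7.2. Write $s_\gamma = T_1 + T_2$ with \emph{growth phase} $T_1 := \alpha N^7 \sqrt{\tau_N}$ and \emph{coupling phase} $T_2 := c_\gamma N \sqrt{\tau_N}(\log N)^2$. Set $m := \lfloor |A_0|/N \rfloor$, $K := m + T_1/N$, and $F := C\sqrt{\tau_N}(\log N)^2$ for a sufficiently large absolute constant $C$. The growth phase is designed to show that, with probability at least $1 - 2N^{-(\gamma+1)}$,
\[
R_{K - F} \;\subseteq\; A(T_1),\ A'(T_1) \;\subseteq\; R_{K + F}.
\]
Conditional on this event, Proposition~\ref{pr:coupling} applied to the remaining $T_2$ releases, with $R_{K - F}$ in the role of common starting rectangle, couples the two processes so that $A(s_\gamma) = A'(s_\gamma)$ with probability at least $1 - N^{-(\gamma+1)}$. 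A union bound over the three failure events then delivers $\|P(s_\gamma) - P'(s_\gamma)\|_{TV} \le N^{-\gamma}$.

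For the near-flat process $A'$, the slab inclusion at time $T_1$ is a direct consequence of Theorem~\ref{th:Tpoly}: since $R_m \subseteq A'_0 \subseteq R_{m+2}$, shifting coordinates down by $m$ reduces to IDLA starting from a configuration sandwiched between $R_0$ and $R_2$, and the at most $2N$ extra top particles contribute only $O(1)$ to the height bounds. For the potentially spiky process $A$, the upper bound $h(A(T_1)) \le K + F$ is obtained from the excess-height drift: Lemma~\ref{le:lowE} provides a drift of magnitude at least $1/(2N)$ for $\cE(A(t))$ whenever $\cE(A(t)) > C_\cE \sqrt{\tau_N}(\log N)^2$, and iterating this drift over the $T_1 = \alpha N^7\sqrt{\tau_N}$ steps of the growth phase, as in the proof of Lemma~\ref{le:lowEtolowH}, brings the initial excess $\cE(A_0) \le h(A_0) \le T_1/N$ below $F$ with probability at least $1 - N^{-(\gamma+2)}$; then $h(A(T_1)) = \cE(A(T_1)) + |A(T_1)|/N \le F + K$.

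The main obstacle is the matching inner inclusion $R_{K - F} \subseteq A(T_1)$, since small excess height alone does not exclude holes in the lower part of the cluster. I would handle it by combining the Abelian property with a variant of Proposition~\ref{pr:coupling}. Indeed, $A(T_1)$ has the same law as the stabilisation of $A_0 + T_1 \pi_N$; reordering topplings so that the $T_1$ newly released particles are first stabilised in the background $R_0$ alone produces a flat-start IDLA cluster $A^{\mathrm{flat}}(T_1)$, which by Theorem~\ref{th:Tpoly} satisfies $R_{T_1/N - F/2} \subseteq A^{\mathrm{flat}}(T_1)$ with probability at least $1 - N^{-(\gamma+2)}$; the $|A_0\setminus R_0| \le (m+1)N$ particles of $A_0\setminus R_0$ are then re-inserted and the configuration restabilised. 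On the above high-probability event each such extra particle is re-inserted inside a tall filled rectangle, so its toppling walker must climb above level $T_1/N + F/2$ before settling; by Proposition~\ref{pr:mixG} this climb is long enough for the walker to mix horizontally, so the settlement location coincides in distribution with that of a walker released from $\pi_N$ at level $0$ (a straightforward adaptation of Proposition~\ref{pr:coupling} to walkers starting inside the cluster). Consequently $A(T_1)$ coincides in law, up to failure probability $N^{-(\gamma+2)}$, with $A^{\mathrm{flat}}(T_1 + |A_0\setminus R_0|)$, and a final application of Theorem~\ref{th:Tpoly} to the latter yields $R_{K - F} \subseteq A(T_1)$ with the required probability. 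This adaptation of the coupling of Proposition~\ref{pr:coupling} to walkers starting above level~$0$ is the technical heart of the argument and is where the proof departs from \cite{levine2018long}, where the optimal logarithmic fluctuation bound for $G = \bbZ_N$ made the analogous inner inclusion automatic.
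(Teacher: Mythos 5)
Your two-phase plan breaks down at the coupling phase. Proposition~\ref{pr:coupling} requires the two processes to have the \emph{same} initial configuration $A(0)=A'(0)$ (same realization, not merely the same law or a common sandwiching slab); its conclusion $A(t)=A'(t)$ for all $t$ cannot even get started otherwise. After your growth phase you only know $R_{K-F}\subseteq A(T_1),A'(T_1)\subseteq R_{K+F}$, and the two clusters still differ on up to $2FN$ sites in the slab. Coupling the trajectories of the remaining $T_2$ walkers — even perfectly — does not resolve these discrepancies: a coupled pair of walkers exits two \emph{different} clusters and in general settles at two different sites, so the symmetric difference has no reason to shrink. This is precisely the obstruction that the water-level coupling of \cite{levine2018long} is designed to circumvent: one grows a \emph{single} cluster $W_0$ from flat with $s_\gamma$ particles (so that, by Theorem~\ref{th:Tpoly}, it contains both $A_0$ and $A'_0$ with a margin of order $\sqrt{\tau_N}(\log N)^2$), places frozen particles at the sites of $A_0$ and of $A'_0$ on top of the \emph{same realization} of $W_0$, and releases the $T=|A_0|$ frozen pairs one at a time, coupling each pair \`a la Proposition~\ref{pr:coupling} so that the two walkers meet before leaving $W_0$. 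The entire discrepancy between the two processes is thus carried by the frozen walkers' starting positions, and the Abelian property identifies the resulting laws with those of $A(s_\gamma)$ and $A'(s_\gamma)$. No prior slab estimate on $A(T_1)$ is needed.

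Two further points. First, your argument for the inner inclusion $R_{K-F}\subseteq A(T_1)$ — freeze the particles of $A_0\setminus R_0$, grow a flat-start cluster, then release the frozen walkers from deep inside it — is exactly the water-level construction; had you run it for $A$ and $A'$ \emph{simultaneously on the same $W_0$}, pairing the frozen walkers, you would have obtained the full proof and could discard the growth/coupling decomposition altogether. Second, your outer bound for $A(T_1)$ misquotes Lemma~\ref{le:lowE}: the drift kicks in only above the threshold $C_\cE N\sqrt{\tau_N}(\log N)^2$ (note the factor $N$), so the excess-height argument can at best bring $\cE(A(T_1))$ down to order $N\sqrt{\tau_N}(\log N)^2$, giving $h(A(T_1))\le K+O(N\sqrt{\tau_N}(\log N)^2)$ rather than $K+F$; moreover controlling $\cE$ at the deterministic time $T_1$ requires the stopping-time argument of Lemma~\ref{le:lowEtolowH}, whose output is the much weaker bound $N^4\sqrt{\tau_N}$.
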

To prove the above result we use the coupling ideas introduced in Section \ref{sec:big}. Indeed, let $W_0$ be a new IDLA cluster, independent of everything else, built by adding $s_\g $ particles to the flat configuration $R_0$. Let $\b_{\g +1, 9}$ be as in the statement of Proposition \ref{pr:coupling}, and write $\b$ in place of $ \b_{\g +1, 9}$ for brevity. 
By Theorem \ref{th:Tpoly} we can choose $c_\g$ large enough so that $W_0$ is completely filled up to height $\a N^6 \sqrt{\tau_N} + \b \sqrt{\tau_N} (\log N)^2$ with high probability. In particular, $W_0$ contains both $A(0)$ and $A'(0)$, and each vertex of $A(0) \cup A'(0)$ is at distance at least $b \sqrt{\t_N} (\log N)^2 $ from the boundary of $W_0$. 

Let $W(0)$ (respectively $W'(0)$) denote the cluster obtained by adding to $W_0$ a frozen particle at each site of $A(0)$ (respectively $A'(0)$). Then $W(0) , W'(0)$ contain the same number $T=|A_0|$ of frozen particles, that we release in pairs. To do this, fix an arbitrary enumeration of the locations of the frozen particles: 
	\[ z_1 , z_2 \ldots z_T \in W(0) , \qquad 
	 z'_1 , z'_2 \ldots z'_T \in W'(0) .\]
For each  $1\leq t\leq T$ we build $W(t) , W'(t)$ from $W(t-1) , W'(t-1)$ as follows. Start two simple random walks in $W(t-1) , W'(t-1)$ from $z_t , z'_t$. The walks are coupled in such a way that their trajectories meet within $W_0$ with probability at least $N^{-(\g+10)}$, as in the proof of Proposition \ref{pr:coupling}. They settle upon exiting $W(t-1) , W'(t-1)$ respectively, and their settling locations are then added to $W(t-1) , W'(t-1)$ to make $W(t) , W'(t)$. Note that, importantly, if $W(t-1) = W'(t-1)$ and the $t^{th}$ coupling is successful, then $W(t) = W'(t)$. 

This defines two auxiliary processes  $(W(t))_{t\leq T}$ and $(W'(t))_{t\leq T}$ on $\Omega$. We stress that these are not IDLA processes, due to the presence of the frozen walkers. Nevertheless, the Abelian property implies that 
		\[ A(s_\g ) \stackrel{(d)}{=} W(T) , \qquad 
	A'(s_\g ) \stackrel{(d)}{=} W'(T) ,\]
since after $T$ steps all the frozen walkers have settled. To finish the proof it then suffices to show that 
	\[ \| W(T) - W'(T) \|_{TV} \leq N^{-(\g +1)}\]
for $N$ large enough. But this follows from Proposition \ref{pr:coupling}, since $T=|A_0|$ is at most polynomial in $N$, and all the frozen walkers' starting locations are at distance at least $\b \sqrt{\t_N} (\log N)^2$ from the cluster boundary, thus leaving time for the walkers to mix before exiting. 
\medskip

\emph{Step 4: Stationary height: proof of Theorem \ref{th:typical_height}.} 
We use Proposition \ref{pr:step3} to argue that a stationary shifted IDLA process has, after polynomially many steps, both stationary law and $\mathcal{O}(\sqrt{\tau_N} (\log N)^2)$ fluctuations, since it forgot that it did not start from flat with high probability. This tells us that stationary clusters have height $\mathcal{O}(\sqrt{\tau_N}( \log N)^2)$, and it thus proves  Theorem \ref{th:typical_height}.

To expand, let $(A(t))_{t\geq 0}$ denote a shifted IDLA process started at stationarity $A(0) \sim \mu_N$. Then $A(t) \sim \mu_N$ for all $t\geq 0$. In particular, if for arbitrary $\g >0$ we define $s_\g $ as in Proposition \ref{pr:step3}, we have that 
	\[ A(s_\g ) \sim \mu_N. \]
On the other hand, since $h(A_0) =\mathcal{O}( N^6 \sqrt{\tau_N})$ with high probability by Lemma \ref{le:lowEstationary}, Proposition \ref{pr:step3} tells us that $A(s_\g)$ is close in distribution to a shifted IDLA process started from height at most $1$, and hence, using that  $s_\g \leq N^9$ for $N$ large enough, 
\[ h(A(s_\g ) ) \leq C_\g \sqrt{\tau_N} (\log N)^2 \]
with high probability by Theorem \ref{th:Tpoly}. 

\smallskip 

\emph{Step 5: Proof of Theorem \ref{th:main_teo}.}
For arbitrary $\g >0$, let $c_\g$ be defined as in Theorem \ref{th:typical_height}, and set 
	\[ \Omega_\g = \{ A \in \Omega : h(A) \leq c_\g \sqrt{\tau_N} (\log N)^2 \}. \]
Then $\mu_N(\Omega_\g ^c) \leq N^{-\g}$, and Theorem \ref{th:main_teo} follows by exactly the same argument used in the proof of Proposition \ref{pr:step3}.

\medskip

\bibliography{HLbib2}
\bibliographystyle{plain}

\end{document}